\definecolor{trueblue}{rgb}{0.0, 0.45, 0.81}
\definecolor{truegreen}{rgb}{0.13, 0.55, 0.13}
\newcommand{\EEE}{\color{black}}
\newcommand{\eps}{\varepsilon}
\theoremstyle{plain}
\newtheorem{theorem}{Theorem}[section]
\newtheorem{lemma}[theorem]{Lemma}
\newtheorem{remark}[theorem]{Remark}
\theoremstyle{definition}
\numberwithin{equation}{section}
\newcommand{\N}{\mathbb{N}}
\newcommand{\R}{\mathbb{R}}
\newcommand{\defas}{:=}
\begin{document}

\title[Existence of quasi-static crack evolution for atomistic systems]{Existence of quasi-static crack evolution for atomistic systems}

\author[R. Badal]{Rufat Badal} 
\address[Rufat Badal]{Department of Mathematics, Friedrich-Alexander Universit\"at Erlangen-N\"urnberg. Cauerstr.~11,
D-91058 Erlangen, Germany}
\email{rufat.badal@fau.de}

\author[M. Friedrich]{Manuel Friedrich} 
\address[Manuel Friedrich]{Department of Mathematics, Friedrich-Alexander Universit\"at Erlangen-N\"urnberg. Cauerstr.~11,
D-91058 Erlangen, Germany, \& Mathematics M\"{u}nster,  
University of M\"{u}nster, Einsteinstr.~62, D-48149 M\"{u}nster, Germany}
\email{manuel.friedrich@fau.de}

\author[J. Seutter]{Joscha Seutter} 
\address[Joscha Seutter]{Department of Mathematics, Friedrich-Alexander Universit\"at Erlangen-N\"urnberg. Cauerstr.~11,
D-91058 Erlangen, Germany \& Competence Unit for Scientific Computing (CSC), Friedrich-Alexander Universit\"at Erlangen-N\"urnberg (FAU)}
\email{joscha.seutter@fau.de}

\begin{abstract}
We consider  atomistic systems consisting  of interacting particles arranged in atomic lattices whose quasi-static evolution is driven by time-dependent boundary conditions. The interaction of the particles is modeled by classical interaction potentials where we implement a suitable irreversibility condition modeling the  breaking of atomic bonding. This leads to a delay differential equation depending on the complete history of the deformation at previous times. We prove existence of solutions and provide numerical tests for the prediction of quasi-static crack growth in particle systems.  
\end{abstract}

\subjclass[2010]{}
\keywords{Atomistic systems, delay differential equations, minimizing movements, quasi-static crack growth,  irreversibility condition}

\maketitle

\section{Introduction}

The  theory of brittle fracture is based on the fundamental idea that the formation of cracks may be seen as the result of the competition between the elastic bulk energy of the body and the work needed to produce a new crack. Despite its long history in mechanics dating back to \cite{Griffith:20}, a rigorous mathematical formulation of the problem is much more recent: in the  variational model of quasi-static  crack \EEE growth   proposed in  \cite{frma98},  displacements and crack paths are determined from an energy minimization principle, reflecting suitably the three foundational ingredients of Griffith's theory: (1) a surface energy associated  to \EEE discontinuities of the deformation\EEE, (2) a propagation criterion for those surfaces, and (3) an irreversibility condition for the fracture process.


%

The mathematical well-posedness of the variational model  \cite{frma98} has been  the \EEE subject of several contributions over the last two decades, see e.g.\ \cite{DFT, DalMasoToader, Francfort-Larsen:2003, FriedrichSolombrino}. The proof of continuous-time evolutions is based on  approximation by time-discretized versions resulting from a sequence of minimization problems of so-called Griffith functionals, being variants of the   Mumford \& Shah  functional \cite{MS}. Functionals of this type comprising elastic bulk contributions for the sound regions of the body and surface terms along the crack have been studied extensively in the last years in the natural weak framework of special functions of bounded variation. We refer to \cite{BFM} for  a \EEE thorough overview of the variational approach to fracture. In particular, it has been shown that such continuum models can be identified
as effective limits derived from atomistic systems as the interatomic distance tends to zero \cite{Bach.Braides, Braides-Lew-Ortiz:06, FS151}. Asymptotic analysis of this kind is notoriously difficult, and is not based on  simply taking pointwise  limits \EEE but rather on a variational convergence, called $\Gamma$-convergence \cite{DalMaso:93}, which in this setting ensures convergence of minimizers of the atomistic system to the ones of the continuum problem.

Although  studies on  atomistic-to-continuum passages are flourishing, most results are static and the evolutionary nature of processes is mostly  neglected. Our goal is to advance this understanding by establishing a rigorous connection between atomistic and continuum models in brittle fracture for the prediction of quasi-static crack growth. In this contribution, we perform the first step in this direction by setting up a suitable quasi-static model on the atomistic level respecting the irreversibility condition of Griffith's theory. The passage to continuum models of quasi-static crack growth will be discussed in a forthcoming work \cite{FriedrichSeutter}.

We consider atomistic systems consisting  of interacting particles arranged in an atomic lattice occupying a bounded region  representing the reference domain of a material. The interaction of the atoms is modeled by  classical potentials in the frame of Molecular Mechanics, e.g.\ by Lennard-Jones potentials. Based on an energy minimization principle, the deformation of the particles is driven by time-dependent boundary conditions. This leads to an evolution of gradient-flow type. Whereas the study of interacting particles motion is classical, the novelty of our contribution lies in implementing an irreversibility condition along the evolution.  As long as the length of interactions does not exceed a certain  threshold, interactions are considered to be `elastic' and   can recover completely after removal of loading. Otherwise,  interactions are supposed to be `damaged', and the system is affected at the  present and all future times. Here, we suitably adapt the maximal opening memory variable  used in continuum models for cohesive crack evolutions \cite{Zanini, Giacomini} to the atomistic setting.

This results in a delay differential equation taking  the complete history of the deformation into account.  Our approach is based on the assumption that the boundary loading varies slowly in time compared to the elastic wave speed of the material. Therefore, inertial effects are neglected, and the delay differential equation  is a first order equation in time.  Our main result consists in proving existence of solutions, see Theorem \ref{th: main result},  where the strategy   relies on a time-incremental scheme, the so-called minimizing movement scheme for gradient flows \cite{AGS, DGMT, sant-am}. Similarly to the continuum model \cite{Francfort-Larsen:2003}, due to the presence of an irreversibility condition and the memory  effect, \EEE the main difficulty lies in  the passage to the limit as the time-discretization step $\tau$ tends to zero. In particular, we use an evolutionary model of rate type in order to obtain uniform convergence in time as $\tau \to 0$. This allows to pass to the limit in the memory variable, see  Remark \ref{rem: discussion} for some details.

Let us highlight that our  approach is rather simplistic and does not reflect the physical reality of fracture in brittle substances in its whole complexity. On the one hand, our modeling of elastic and damaged interactions is not justified by formation and dissociation  of chemical bonding  but inspired by a continuum mechanics perspective on brittle fracture. In this sense, we believe that our model may be relevant as a discretization of quasi-static crack evolution from continuum theory. On the other hand, physically relevant interatomic potentials describing the behavior of brittle materials need to encompass multi-body contributions, at least three-body potentials \cite{brenner, ters, still}. Following the approach of  several   studies on variational atomistic fracture \cite{Alicandro-Focardi-Gelli:2000, Braides-DalMaso-Garroni:1999, Braides-Gelli:2002-2, braid-gell, FS153}, we focus here on configurational energies consisting purely of pair interactions.  Nevertheless, we present a possible extension of our model to three-body potentials in Subsection \ref{sec: variants}.

\EEE

Our analysis is at the basis of a forthcoming work \cite{FriedrichSeutter} where we study the evolution of the model in the passage to continuum  problems, i.e., when the interatomic distance tends to zero. Indeed, in the static case, $\Gamma$-limits of suitable atomistic systems can be identified as free discontinuity functionals of Griffith-type  \cite{Braides-Lew-Ortiz:06, FS151} featuring a linearly elastic bulk energy and surface contributions proportional to the length of the crack. For these continuum models, existence results for quasi-static crack evolutions have been  derived \EEE in \cite{Francfort-Larsen:2003, FriedrichSolombrino}.  We \EEE will identify evolutions of this form as limits of the solutions to the atomistic delay differential equations in the atomistic-to-continuum limit.

The paper is organized as follows. In Section \ref{sec:setting} we introduce the model and present the main results. Section \ref{sec: examples} is devoted to some explicit examples of possible atomistic systems, and provides several numerical experiments of our model in one  and \EEE two dimensions. Eventually, Section \ref{sec: proofs} contains the  proofs of our results. \EEE  We close the introduction with some notation. In the following, we denote by $|\cdot|$ the standard Euclidean norm in $\mathbb{R}^{m}$ for $m\in \N$. For the scalar product of two vectors $v,w \in \mathbb{R}^{m}$, we write $v \cdot w$. We let $a \vee b:=\max\{a,b\}$ for $a,b\in \mathbb{R}$. As usual, generic constants in proofs are denoted by $C$ and may change from line to line.

\section{The model and main results} \label{sec:setting}

\subsection{Deformations and interaction energy}\label{sec: defo}

We describe the atomistic deformation of a material occupying an open, bounded  set \EEE  $\Omega \subset \R^d$ in dimension $d \in \lbrace 1, 2, 3 \rbrace$. By $\mathcal{L} \subset \R^d$ we denote an arbitrary but fixed \emph{lattice}, and let $\mathcal{L}(\Omega) = \Omega \cap \mathcal{L}$ be the positions of the \emph{atoms} (or particles) in the specimen in the reference configuration. The deformation of the particles is described through  a map \EEE $y \colon \mathcal{L}(\Omega) \to \R^n$, where for each $x \in \mathcal{L}(\Omega)$ the vector $y(x) \in \R^n$ indicates the position of $x$ in the deformed configuration. (The typical choice is $n=d$.) 

We model the interaction of the atoms by  classical potentials in the frame of Molecular Mechanics \cite{Molecular,  Lewars}: given a deformation $y \colon \mathcal{L}(\Omega) \to \R^n$,  we assign a \emph{phenomenological energy} $\mathcal{E}(y)$ to the deformation by 
\begin{align}\label{Energy}
\mathcal{E}(y) = \frac{1}{2}\sum_{\underset{x \neq x'}{x, x' \in \mathcal{L}(\Omega)}} W_{x,x'}\big( |y(x) - y(x')| \big) ,
\end{align}
where $W_{x,x'} \colon [0,\infty) \to  \R$ denotes a smooth  \emph{two-body interaction potential}, with $W_{x,x'} = W_{x',x}$. The factor $\frac{1}{2}$  accounts for the fact that every contribution is counted twice in the sum.  It is also possible that the sum ranges only over a subset of all pairs by simply setting $W_{x,x'} \equiv 0$ for certain interactions. Whenever $W_{x,x'} \neq 0$, the potential \EEE  is supposed to be repulsive for small distances and attractive for long distances, a classical choice being  a potential of \emph{Lennard-Jones-type}. More specifically, we assume that $W_{x,x'}$ satisfies

\begin{enumerate}[label=(\roman*)]
\item \label{W_zero} $\lim_{r \searrow 0 } W_{x,x'}(r) = \infty$;
\item \label{W_min} $W_{x,x'}(r)$ is minimal if and only if $r = |x-x'|$ with $W_{x,x'}(|x-x'|) <0$;
\item \label{W_decr_incr} $W_{x,x'}(r)$ is decreasing for $r < |x-x'|$ and increasing for $r > |x-x'|$;
\item \label{W_inf} $\lim_{r \to +\infty} W_{x,x'}(r) = \lim_{r \to +\infty} \frac{{\rm d} }{{\rm d} r}W_{x,x'}(r) = \lim_{r \to +\infty} \frac{{\rm d}^2 }{{\rm d}^2 r}W_{x,x'}(r) =  0$.
\end{enumerate}

The repulsion effect  \ref{W_zero} describes the Pauli repulsion at short distances of the interacting particles due to overlapping electron orbitals, and \ref{W_min}-\ref{W_inf} correspond to attraction at long ranged interactions vanishing at infinity. Some toy examples of lattices and possible configurational energies are given below in Section \ref{sec: examples}. Let us mention that we restrict our model to pair interactions equilibrated in the reference configuration (see Assumption \ref{W_min}) for mere simplicity. The model could be generalized to pre-stressed interactions, and multi-body, non-local, and multiscale energies could be considered, see e.g.\ \cite{Bach.Braides} for a very general framework. We include, however, non-homogeneous interactions as all potentials $W_{x,x'}$ can be different. A possible extension featuring three-body potentials is addressed in Subsection~\ref{sec: variants} below.  

By $\eps: = \min \{|x-x'| \colon x, x' \in \mathcal L(\Omega), \, x \neq x' \}$ we denote the minimal   \emph{interatomic distance}.  A huge body of literature deals with effective descriptions of atomistic energies of the form \eqref{Energy} when the interatomic distance $\eps$ tends to $0$. This passage from atomistic to continuum models is nontrivial even for simple energies and might lead to very different asymptotic behavior, see \cite{Blanc-LeBris-Lions:2007}  for taking pointwise limits and \cite{Braides-Gelli} for an overview of a variational perspective. For interaction densities of the above form, it has been shown in \cite{Braides-Lew-Ortiz:06, FS151} that effective continuum energies lead to so-called \emph{Griffith energy functionals} featuring both bulk and surface terms. In the present contribution, the number of atoms and $\eps$ are fixed. The study of  an evolutionary model as $\eps \to 0$ is deferred to a forthcoming paper \cite{FriedrichSeutter}.

Following the discussion in \cite{Schmidt:2009, FS151}, we impose Dirichlet boundary condition on a part $\partial_D \Omega \subset \partial \Omega$ of the boundary. More precisely, consider a neighborhood $ U \EEE \subset \Omega$ of $\partial_D \Omega$ such  that $\mathcal{L}_{D}(\Omega) := \mathcal{L}(\Omega) \cap  U \EEE \neq \emptyset$. F\EEE or a function $g \colon \mathcal{L}_{D}(\Omega) \to  \R^n\EEE$, we define  the \emph{admissible deformations} by \EEE
$$\mathcal{A}(g) := \big\{ y\colon \mathcal{L}(\Omega ) \to \R^n \colon \,  y(x) = g(x) \  \ \text{ for all } x \in \mathcal{L}_{D}(\Omega);\, \mathcal{E}(y)<\infty \big\} .$$ 
Instead of (or additional to) Dirichlet boundary conditions, one could also consider body or boundary forces. However, as traction  tests with body forces are ill-posed for Griffith functionals in a continuum variational formulation, we prefer to focus on boundary value problems also on the atomistic level. For a  thorough discussion on the comparison of soft and hard devices in the context of the variational formulation of Griffith's fracture, we refer the reader to \cite[Section 3]{BFM}. 

\subsection{Evolutionary model: Irreversibility condition and memory variable}\label{sec: evo model}

Based on the energy \eqref{Energy}, we will now introduce an evolutionary model on a  process \EEE  time interval  $[0,T]$, where the evolution will be driven by time dependent boundary conditions $g \colon [0,T] \times \mathcal{L}_{D}(\Omega) \to \R^n$. The key feature in the modeling is to implement an irreversibility condition along the fracture process  for each pair interaction \EEE  of \EEE $(x,x')$, once the   length  $|y(x) - y(x')|$ \EEE of the interaction exceeds the elastic regime. As long as   $|y(x) - y(x')|$ does not  surpass \EEE a threshold $R^1_{x,x'} = R^1_{x',x}$ satisfying $R^1_{x,x'} > |x-x'|$, the pair interaction can recover completely after removal of loading. By contrast, if $|y(x) - y(x')|$ exceeds $R^1_{x,x'}$ at some earlier time, the interaction is supposed to be `damaged', and the system should be affected at the present time, i.e., the complete history of the deformation undergone by the atomistic system should be reflected at present time. This corresponds to a model of cohesive type energy \'a la Barenblatt \cite{barenblatt}.

To describe this, we fix $t \in [0,T]$,  a deformation $y$ at time $t$, \EEE and suppose that a finite or infinite  family of deformations $(y_s)_{s <t}$ at previous times is given. 
The interaction  of \EEE $(x,x')$ at time $t$ is considered to be elastic if $|y_s(x) - y_s(x')| \le R^1_{x,x'}$ for all $s < t$. Otherwise, we adopt as memory variable the \emph{maximal opening} as implemented, e.g.\ in the continuum models \cite{Zanini, Giacomini}, see also \cite[Section 5.2]{BFM}. Writing
\begin{align}\label{eq: memory}
M_{x,x'}((y_s)_{s<t}) = \sup_{s < t}   |y_s(x') - y_s(x)|   \vee |x-x'| \EEE 
\end{align}
for the memory variable, the energy contribution  with memory \EEE  reads as 
\begin{align}\label{eq: memory1}
 W_{x,x'} \Big(   M_{x,x'}((y_s)_{s<t}) \vee  |y(x') - y(x)|  \Big).   
\end{align}
Other choices, such as that of a cumulative increment, could also be made, but are not discussed in this paper. Note that by implementing the memory variable, the repulsion effect for small distances (Assumption  \ref{W_zero}\EEE) is lost and \eqref{eq: memory1} does not describe correctly the Pauli repulsion at short distances of the interacting particles. Therefore,  for the energy contribution of a  damaged interaction  we rather use \EEE
\begin{align*}
W_{x,x'} (|y(x') - y(x)|)  \vee  W_{x,x'} \Big(   M_{x,x'}((y_s)_{s<t}) \vee  |y(x') - y(x)|  \Big).   
\end{align*}
Clearly,  by Assumption \ref{W_decr_incr} \EEE this   is equal to 
\begin{align}\label{eq: memory1.5}
W_{x,x'} (|y(x') - y(x)|)  \vee  W_{x,x'} \Big(   M_{x,x'}((y_s)_{s<t})  \Big).   
\end{align}
 This notion is rather simplistic and one drawback lies in the sharp transition between elastic and inelastic behavior. To remedy this, we introduce a second threshold $R^2_{x,x'} > R^1_{x,x'}$   and assume that  in the transition zone  $(R^1_{x,x'}, R^2_{x,x'})$ \EEE the interaction can partially recover, i.e.,  the energy contribution still partly depends on the current deformation. Let $\varphi_{x,x'} \colon [0,\infty) \to [0,1]$ be continuous and  increasing \EEE with $\varphi_{x,x'}(r) = 0 $ for $r \le R^1_{x,x'}$ and  $\varphi_{x,x'}(r) = 1 $ for $r \ge R^2_{x,x'}$. Define the energy contribution as 
\begin{align}\label{eq: memory2}
{E}_{x,x'}(y; (y_s)_{s<t}) :=  &\big(1- \varphi_{x,x'}\big(   M_{x,x'}((y_s)_{s<t}) \big)  \big)     W_{x,x'} \big( |y(x') - y(x)| \big) \\ &+  \varphi_{x,x'}\big(   M_{x,x'}((y_s)_{s<t}) \big) \Big( W_{x,x'} (|y(x') - y(x)|) \vee W_{x,x'}(M_{x,x'}((y_s)_{s<t}))     \Big) .     \notag   
\end{align}
Note that formally this would correspond to \eqref{eq: memory1.5} for the (not admissible) choice  $\varphi_{x,x'}(r) = 0 $ for $r \le R^1_{x,x'}$ and  $\varphi_{x,x'}(r) = 1 $ for $r  >   R^1_{x,x'}$. 
In particular,  \eqref{eq: memory2} \EEE implies
$${E}_{x,x'}(y; (y_s)_{s<t})  = W_{x,x'} \big( |y(x') - y(x)| \big) \quad \text{ if } M_{x,x'}((y_s)_{s<t})  \le R^1_{x,x'} $$
$$ {E}_{x,x'}(y; (y_s)_{s<t})  =  W_{x,x'} (|y(x') - y(x)|) \vee W_{x,x'}(M_{x,x'}((y_s)_{s<t}))       \quad \text{ if } M_{x,x'}((y_s)_{s<t})  \ge R^2_{x,x'}. $$
We adopt the choice \eqref{eq: memory2} for a continuous function interpolating between $0$ and $1$ also for technical reasons, see the discussion in  Remark \ref{rem: discussion}.
Summarizing, given the history $(y_s)_{s<t}$, the energy of  an admissible configuration $y \in \mathcal{A}(g(t))$ is given by  
\begin{align}\label{eq: main energy}
\mathcal{E}(y; (y_s)_{s<t}) = \frac{1}{2}\sum_{\underset{x \neq x'}{x, x' \in \mathcal{L}(\Omega)}} {E}_{x,x'}(y; (y_s)_{s<t}). 
\end{align}

\subsection{Quasi-static evolution of atomistic systems with damageable interactions}\label{sec: result}

We now present the main result of the paper on a quasi-static evolution of atomistic systems with the irreversibility condition introduced above.  We introduce some further notation.  Let  $N$ be the cardinality of $\mathcal{L}(\Omega)$  and let $\bar{N}$ be the cardinality of $(\mathcal{L}(\Omega) \setminus \mathcal{L}_D(\Omega))$, respectively.  \EEE For convenience, \EEE  from now on we will denote $y \colon \mathcal{L}(\Omega) \to \R^n$ as a single vector $y \in \R^{Nn}$ via 
$$y = (y(x_1), \ldots y(x_N)) \in \R^{Nn},$$ 
where $(x_1,\ldots,x_N)$ is a fixed numbering of $\mathcal{L}(\Omega)$ with $\lbrace x_{\bar{N}+1}, \ldots, x_N\rbrace = \mathcal{L}_D(\Omega)$.  In this sense,  $\mathcal{E}(\cdot; (y_s)_{s<t})$ in  \eqref{eq: main energy} is a function defined on $\R^{Nn}$ with variables $y_1,\ldots, y_N$.  We also regard boundary conditions $g \colon [0,T] \times \mathcal{L}_{D}(\Omega) \to \R^n$ as vectors $g(t) \in \R^{Nn}$ for each $t \in [0,T]$ by setting $g(t,x) = 0$ for all $x \in \mathcal{L}(\Omega) \setminus \mathcal{L}_D(\Omega)$. \EEE

 As it is customary in both  existence proofs  and in numerical approximation schemes, we start with a time-discretization of the problem. Fix a discrete time step size $\tau >0$, which for simplicity satisfies  $T/\tau \in \N$.  We write $t_k^{\tau}=k\tau$ for $k \in \lbrace 0, \ldots,  T/\tau \rbrace$, \EEE and we also define $g^\tau_k = g(t^\tau_k)$. Suppose that an initial configuration $y_0 \in \mathcal{A}(g^\tau_0)$ is given. Fix a time $t^\tau_k$ and suppose that the deformations  $(y^\tau_j)_{j < k}$ at the previous times $t^\tau_j$, $j < k$, have already been found.  Then, the deformation at time $t^\tau_k$, denoted by $y^\tau_k  \in \mathcal{A}(g^\tau_k)$, is given as the minimizer of
\begin{align}\label{eq: MM} \min_{y \in \mathcal{A}(g^\tau_k)}   \Big( \mathcal{E}(y; (y^\tau_j)_{j<k})  +   \frac{\nu}{2 \tau}  |y - y_{k-1}^\tau |^2   \Big). 
\end{align}  
Here, $|\cdot|$ denotes the  Euclidean norm in $\R^{Nn}$,  where \EEE $\nu>0$  is \EEE a \emph{dissipation coefficient}.   We remark that \eqref{eq: MM} corresponds to the \emph{minimizing movement scheme} for gradient flows  of \EEE the energy $\mathcal{E}(\cdot; (y^\tau_j)_{j<k})$ with respect to the $L^2$-norm. At the end of the section, see \eqref{eq: the new scheme}, we present a variant of  \eqref{eq: MM} which corresponds to a model in  the Kelvin-Voigt's rheology.  Note that an essential part of our problem is  the fact \EEE that the energy depends on the complete history, i.e., on the previous deformations  $(y^\tau_j)_{j<k}$, which is usually not present in incremental schemes of the form  \eqref{eq: MM}. Let us mention that this  relates \EEE  to a discrete-time formulation where, in contrast to   common variational \EEE approaches to fracture in the realm of rate-independent systems, in each step    we do not consider absolute minimizers of the energy, but determine local minimizers by penalizing the $L^2$-distance between the approximate solutions at two consecutive times. This approach itself is not new  and \EEE has been adopted, e.g. in \cite{DalMasoToader}.

Note that the existence of a minimizer is readily guaranteed since the problem is finite dimensional, the potential is continuous, and the energy is coercive.   Moreover, one can also check that for $\tau$ sufficiently small the problem is strictly convex, and therefore the minimizer is unique.

We now introduce piecewise constant and piecewise affine interpolations  
by  $y_{\tau}(t):=y_{k+1}^{\tau}$  for $t\in (t_k^{\tau},t_{k+1}^{\tau}]$ and 
$\hat{y}_{\tau}(t):=y_k^{\tau}+(t-k\tau)v_{\tau}(t)$ for $t\in (t_k^{\tau},t_{k+1}^{\tau}]$, respectively, where  $v_{\tau}(t):=\frac{1}{\tau}(y^\tau_{k+1}-y^\tau_k) $.
Our goal is to pass to the limit $\tau\to 0$, and to prove that $y_{\tau}$ converges uniformly to a limit evolution $y$, which  satisfies a delay differential equation taking the irreversibility of the fracture process into account. We start with the compactness result.

\begin{theorem}[Compactness]\label{th: compactness}
Assume that $g \in H^1([0,T]; \R^{Nn})$. 
Suppose that an initial datum $y_0 \in \mathcal{A}(g(0))$ is given, and let $y_{\tau}$ and $\hat{y}_{\tau}$ be constructed as above. Then, \EEE there exists $y \in H^1([0,T]; \R^{Nn})$ with $y(t) \in \mathcal{A}(g(t))$ for all $t \in [0,T]$ such that $y_{\tau}$ and   $\hat{y}_{\tau}$ converge uniformly to $y$ on $[0,T]$, as well as   $\partial_t\hat{y}_{\tau}  \rightharpoonup \partial_t y $ weakly in $L^2([0,T]; \R^{Nn} )$.          
\end{theorem}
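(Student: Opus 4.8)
The plan is to follow the classical energy-estimate strategy for minimizing movement schemes, adapted to the memory-dependent energy. Here the crucial point is that the memory variable $M_{x,x'}$ enters the energy $\mathcal{E}(\cdot;(y^\tau_j)_{j<k})$ only through the supremum of past interaction lengths, so it is \emph{monotone nondecreasing} along the discrete evolution; this monotonicity will be exploited to absorb the extra terms that arise when comparing consecutive minimization problems.

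\emph{Step 1: Discrete energy estimate.} Fix $k$ and use $y^\tau_{k-1}$ (modified to satisfy the boundary datum $g^\tau_k$ by adding $g^\tau_k - g^\tau_{k-1}$, which lies in the $\mathcal{L}_D$-components only) as a competitor in \eqref{eq: MM}. Minimality of $y^\tau_k$ gives
\begin{align*}
\mathcal{E}(y^\tau_k;(y^\tau_j)_{j<k}) + \frac{\nu}{2\tau}|y^\tau_k - y^\tau_{k-1}|^2 \le \mathcal{E}(y^\tau_{k-1} + g^\tau_k - g^\tau_{k-1};(y^\tau_j)_{j<k}) .
\end{align*}
The point is then to compare $\mathcal{E}(\cdot;(y^\tau_j)_{j<k})$ with $\mathcal{E}(\cdot;(y^\tau_j)_{j<k-1})$: since the memory variable only grows, and since $r \mapsto W_{x,x'}(r)$ is increasing past $|x-x'|$ while $\varphi_{x,x'}$ is increasing, one checks that for a \emph{fixed} configuration the energy is nondecreasing in the memory argument — so $\mathcal{E}(y^\tau_{k-1} + g^\tau_k - g^\tau_{k-1};(y^\tau_j)_{j<k}) \ge \mathcal{E}(y^\tau_{k-1};(y^\tau_j)_{j<k-1})$ is the \emph{wrong} direction; instead I will add and subtract and estimate the difference $\mathcal{E}(y^\tau_{k-1}+g^\tau_k-g^\tau_{k-1};(y^\tau_j)_{j<k}) - \mathcal{E}(y^\tau_{k-1};(y^\tau_j)_{j<k-1})$ by a Lipschitz bound in the boundary increment, $C|g^\tau_k - g^\tau_{k-1}| \le C\int_{t^\tau_{k-1}}^{t^\tau_k}|\dot g|$, using that $W_{x,x'}$ and its derivative are bounded on the relevant compact range of distances (coercivity of $\mathcal{E}$ from \ref{W_zero}, \ref{W_min} confines all interaction lengths to a bounded set once the energy is bounded). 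Summing over $k$ and using a discrete Gronwall argument yields $\sup_k \mathcal{E}(y^\tau_k;\cdot) \le C$ and, more importantly, $\sum_k \frac{\nu}{\tau}|y^\tau_k - y^\tau_{k-1}|^2 \le C$, with $C$ independent of $\tau$.

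\emph{Step 2: Compactness.} The bound $\sum_k \frac{1}{\tau}|y^\tau_k-y^\tau_{k-1}|^2 \le C$ says exactly that $\|\partial_t \hat y_\tau\|_{L^2([0,T];\R^{Nn})}^2 = \sum_k \tau |v_\tau|^2 = \sum_k \frac{1}{\tau}|y^\tau_k-y^\tau_{k-1}|^2 \le C$, so $\{\hat y_\tau\}$ is bounded in $H^1([0,T];\R^{Nn})$. By Banach–Alaoglu and the compact embedding $H^1 \hookrightarrow C^0$ (Arzelà–Ascoli, since the $H^1$ bound gives equicontinuity via $|\hat y_\tau(t)-\hat y_\tau(s)| \le |t-s|^{1/2}\|\partial_t\hat y_\tau\|_{L^2}$, plus a pointwise bound at $t=0$ from $y_0$), we extract $\hat y_\tau \to y$ uniformly and $\partial_t \hat y_\tau \rightharpoonup \partial_t y$ weakly in $L^2$, with $y \in H^1([0,T];\R^{Nn})$. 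Next, $\|y_\tau - \hat y_\tau\|_{L^\infty} \le \max_k |y^\tau_k - y^\tau_{k-1}| \le \max_k \big(\tau \cdot \tfrac1\tau |y^\tau_k-y^\tau_{k-1}|^2\big)^{1/2}\cdot$(something) — more cleanly, $|y^\tau_k - y^\tau_{k-1}|^2 \le \tau \sum_j \frac1\tau|y^\tau_j - y^\tau_{j-1}|^2 \le C\tau$, so $\|y_\tau - \hat y_\tau\|_{L^\infty} \le (C\tau)^{1/2} \to 0$, hence $y_\tau \to y$ uniformly as well.

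\emph{Step 3: Admissibility of the limit.} For each fixed $t$, $y_\tau(t) = y^\tau_{k(\tau,t)+1}$ satisfies the boundary condition $g^\tau_{k+1} = g(t^\tau_{k+1})$, and $t^\tau_{k+1} \to t$, $g$ continuous (being $H^1$ in time), so passing to the limit in the $\mathcal{L}_D$-components gives $y(t,x) = g(t,x)$ for $x \in \mathcal{L}_D(\Omega)$. Finiteness of $\mathcal{E}(y(t))$ — i.e. $y(t) \in \mathcal{A}(g(t))$ — follows because $\mathcal{E}(y^\tau_k;\cdot) \le C$ forces all interaction lengths $|y^\tau_k(x)-y^\tau_k(x')|$ to stay in a compact subset of $(0,\infty)$ uniformly (using \ref{W_zero} for the lower bound and \ref{W_min}–\ref{W_decr_incr}/\ref{W_inf} for the upper bound — a diverging length would contribute bounded energy, so actually one needs the lower bound, which is the essential one: uniform energy bound $\Rightarrow$ no two deformed points collide in the limit), and this compactness passes to the uniform limit $y(t)$, giving $\mathcal{E}(y(t)) < \infty$.

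\emph{Main obstacle.} The one genuinely delicate point is Step 1: controlling the mismatch between the two history-dependent energies $\mathcal{E}(\cdot;(y^\tau_j)_{j<k})$ and $\mathcal{E}(\cdot;(y^\tau_j)_{j<k-1})$ when estimating the increment, and in particular checking that the extra terms generated by the change in the memory variable $M_{x,x'}$ from step $k-1$ to step $k$ are either of the right sign (by monotonicity of $M$ and the structure of \eqref{eq: memory2}) or absorbable, so that the telescoping/Gronwall argument closes with a $\tau$-independent constant. Here the smoothness assumptions \ref{W_zero}–\ref{W_inf} on $W_{x,x'}$ and the interpolation $\varphi_{x,x'}$ — rather than the sharp cutoff — are exactly what make the energy Lipschitz in the relevant arguments on the compact range dictated by the energy bound; this is presumably the reason for adopting the smooth $\varphi_{x,x'}$ alluded to in Remark \ref{rem: discussion}.
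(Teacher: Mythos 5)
Your overall strategy matches the paper's: test \eqref{eq: MM} with $y^\tau_{k-1}+g^\tau_k-g^\tau_{k-1}$, derive a Lipschitz bound $C|g^\tau_k-g^\tau_{k-1}|$ for the energy increment on a compact range of interaction lengths, telescope (no Gronwall is actually needed — the sum telescopes directly), and then Arzel\`a--Ascoli for the interpolants; Steps 2 and 3 are sound. But there is a genuine gap at precisely the point you flag as the ``main obstacle,'' and it is not closed. Your Step 1 estimates the quantity
$$\mathcal{E}\big(y^\tau_{k-1}+g^\tau_k-g^\tau_{k-1};(y^\tau_j)_{j<k}\big)-\mathcal{E}\big(y^\tau_{k-1};(y^\tau_j)_{j<k-1}\big),$$
in which \emph{both} the configuration and the memory argument change. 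Writing this as
$$\Big[\mathcal{E}\big(y^\tau_{k-1}+g^\tau_k-g^\tau_{k-1};(y^\tau_j)_{j<k}\big)-\mathcal{E}\big(y^\tau_{k-1};(y^\tau_j)_{j<k}\big)\Big]+\Big[\mathcal{E}\big(y^\tau_{k-1};(y^\tau_j)_{j<k}\big)-\mathcal{E}\big(y^\tau_{k-1};(y^\tau_j)_{j<k-1}\big)\Big],$$
the first bracket is indeed controlled by the Lipschitz argument you sketch (once the uniform lower bound on interatomic distances is secured inductively from the energy bound). The second bracket, however, is the true crux: it is not ``of the right sign or absorbable'' in a soft sense — it is \emph{exactly zero}, and this requires a proof. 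The paper isolates it as a separate lemma (the ``lift lemma'', Lemma~\ref{lift_lemma}): for the configuration $y_{k-1}$ itself, updating the memory variable to include $y_{k-1}$ does not change the energy, i.e.\ $\mathcal{E}(y_{k-1};(y_j)_{j<k-1})=\mathcal{E}(y_{k-1};(y_j)_{j<k})$. The proof is a short case analysis on whether $|y_{k-1}(x)-y_{k-1}(x')|$ exceeds the old memory $M_{x,x'}((y_j)_{j<k-1})$, using the monotonicity of $W_{x,x'}$ on $[|x-x'|,\infty)$ from assumption~\ref{W_decr_incr} and the structure of \eqref{eq: memory2}. Without this identity the telescoping does not close, so you should state and prove it rather than appeal to monotonicity heuristics — in particular, naive monotonicity of $\mathcal{E}$ in the memory (which you correctly note goes the ``wrong'' direction) is neither needed nor the relevant fact.
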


We are now ready to present the main result of the paper.

\begin{theorem}[Quasi-static evolution for damageable pair interactions]\label{th: main result}
Given an initial value  $y_0 \in \mathcal{A}(g(0))$, denote by  $y \in H^1([0,T]; \R^{Nd})$ the function identified in Theorem \ref{th: compactness}. Then, $y$ satisfies the delay differential equation
\begin{align}  \label{eq: maineq}
\nabla \mathcal{E}(y(t); (y(s))_{s<t})=-\nu\partial_t y(t) \quad \text{on $\mathcal{L}(\Omega) \setminus \mathcal{L}_{D}(\Omega)$ for a.e.\ $t \in [0,T]$}.
\end{align}
\end{theorem}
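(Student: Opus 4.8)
The plan is to derive the Euler–Lagrange equation for each time-discrete minimization problem \eqref{eq: MM} and then pass to the limit $\tau \to 0$ using the convergences established in Theorem \ref{th: compactness}. First I would note that for the minimizer $y^\tau_k \in \mathcal{A}(g^\tau_k)$ of \eqref{eq: MM}, the first-order optimality conditions with respect to the free variables (those indexed by $x \in \mathcal{L}(\Omega) \setminus \mathcal{L}_D(\Omega)$) read $\nabla \mathcal{E}(y^\tau_k; (y^\tau_j)_{j<k}) + \frac{\nu}{\tau}(y^\tau_k - y^\tau_{k-1}) = 0$ on the free nodes, where the gradient is taken only in the free components; this uses that $y \mapsto \mathcal{E}(y; (y^\tau_j)_{j<k})$ is $C^1$ away from collisions (a consequence of \ref{W_zero}) and that the minimizer stays in the region where $\mathcal{E}$ is finite, hence smooth, by the coercivity/repulsion bound. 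In terms of the interpolants this is exactly $\nabla \mathcal{E}(y_\tau(t); (y_\tau(s))_{s<t}) = -\nu \, \partial_t \hat{y}_\tau(t)$ for a.e.\ $t$, restricted to $\mathcal{L}(\Omega) \setminus \mathcal{L}_D(\Omega)$, with an appropriate care in how the history is discretized (it is the piecewise-constant left-continuous history that enters).

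Next I would pass to the limit. By Theorem \ref{th: compactness}, $y_\tau \to y$ uniformly and $\partial_t \hat{y}_\tau \rightharpoonup \partial_t y$ weakly in $L^2$. Testing the discrete equation against an arbitrary $\phi \in L^2([0,T];\R^{Nn})$ supported on the free nodes and integrating in time, the right-hand side converges to $-\nu \int_0^T \partial_t y \cdot \phi$. For the left-hand side I would argue that $\nabla \mathcal{E}(y_\tau(t); (y_\tau(s))_{s<t}) \to \nabla \mathcal{E}(y(t); (y(s))_{s<t})$ pointwise a.e.\ in $t$ and is uniformly bounded, so dominated convergence applies. The pointwise convergence requires: (a) uniform convergence of the current configuration $y_\tau(t) \to y(t)$, which is given; and (b) convergence of the memory variables $M_{x,x'}((y_\tau(s))_{s<t}) \to M_{x,x'}((y(s))_{s<t})$ for a.e.\ $t$. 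Since $M_{x,x'}((y_s)_{s<t}) = \sup_{s<t} |y_s(x') - y_s(x)| \vee |x-x'|$ is a supremum of the continuous functional $s \mapsto |y_s(x')-y_s(x)|$ over $s < t$, uniform convergence $y_\tau \to y$ on $[0,T]$ transfers to convergence of these suprema (at every $t$ where $t \mapsto M_{x,x'}((y(s))_{s<t})$ is continuous, which is all but countably many $t$ since it is monotone). Then, because $W_{x,x'}$, $\varphi_{x,x'}$ and their relevant derivatives are continuous, the integrand $E_{x,x'}$ and hence $\mathcal{E}$ depend continuously on $(y, M)$, and so do the $y$-gradients, giving the desired pointwise convergence of $\nabla \mathcal{E}$.

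The main obstacle I anticipate is precisely the convergence in the memory variable, i.e., justifying that one may pass to the limit inside the history-dependent nonlinearity; this is the reason the authors emphasize uniform (not merely weak or $L^2$) convergence of $y_\tau$, as flagged in the introduction and Remark \ref{rem: discussion}. A secondary technical point is that the discrete history at time $t^\tau_k$ is $(y^\tau_j)_{j<k}$, a finite sample, whereas the limit involves the full continuum family $(y(s))_{s<t}$; one must check that the piecewise-constant sampling of the history does not lose mass in the supremum in the limit — again this follows from uniform convergence together with the fact that $s \mapsto |y(s)(x')-y(s)(x)|$ is continuous, so its sup over the grid points below $t$ converges to its sup over all $s<t$. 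Finally, I would record the boundedness of $\nabla \mathcal{E}(y_\tau(t);\cdot)$ uniformly in $\tau$ and $t$: this comes from the a priori energy bound (the minimizing movement estimate yielding $\sup_{\tau,t}\mathcal{E}(y_\tau(t);(y_\tau(s))_{s<t}) < \infty$, hence a uniform lower bound on interatomic distances via \ref{W_zero}) together with the smoothness of $W_{x,x'}$ on compact subsets of $(0,\infty)$ and the decay \ref{W_inf} controlling large distances. With pointwise convergence and domination in hand, the integrated equation passes to the limit for every test function $\phi$, and since $\phi$ is arbitrary on the free nodes we conclude \eqref{eq: maineq} holds a.e.\ on $[0,T]$.
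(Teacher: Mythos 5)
Your overall strategy (discrete Euler--Lagrange condition, uniform convergence to control the current configuration and the memory variable, continuity of $W'$ and $\varphi$ to pass to the limit, dominated convergence after testing) matches the skeleton of the paper's proof. However, there is a genuine gap: you assert that $y \mapsto \mathcal{E}(y;(y^\tau_j)_{j<k})$ is $C^1$ on the finite-energy region and write the first-order optimality condition as $\nabla\mathcal{E}(y^\tau_k;\cdot)+\frac{\nu}{\tau}(y^\tau_k - y^\tau_{k-1})=0$. This is not true. Each contribution $E_{x,x'}$ in \eqref{eq: memory2} contains the term
\[
\varphi\bigl(M_{x,x'}\bigr)\,\Bigl(W_{x,x'}\bigl(|y(x')-y(x)|\bigr)\vee W_{x,x'}\bigl(M_{x,x'}\bigr)\Bigr),
\]
and the $\vee$ produces a genuine kink as a function of $y$ on the set where $W_{x,x'}(|y(x')-y(x)|)=W_{x,x'}(M_{x,x'})$ (with $W'\neq 0$, so the two branches have different slopes). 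The minimizer $y^\tau_k$ may well lie exactly on this kink set --- in fact, by Lemma \ref{lift_lemma} the minimizer `lifts' the memory variable to $M_k=|y_k(x')-y_k(x)|$ whenever the opening increases, so sitting at the kink is not a degenerate coincidence. Consequently the discrete optimality condition is only an inclusion $\nu\partial_t\hat y_\tau(t)\in -\partial\mathcal{E}_\tau(t,y_\tau(t))$ for a $\lambda$-convex subdifferential, not a gradient equation, and the limit object $\nabla\mathcal{E}(y(t);(y(s))_{s<t})$ you write may not exist as a single vector.

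This is precisely what the paper has to handle: it introduces the subdifferential of $\lambda$-convex functions in \eqref{eq: subdiffi}--\eqref{eq: subdiff2}, passes to the limit in the set-valued inclusion using your observations (uniform convergence of $y_\tau$ and of the memory variable, continuity of $\varphi$ and $W'$), and then upgrades the differential inclusion $\nu\partial_t y(t)\in -\partial\mathcal{E}(y(t);\cdot)$ to the equality $\nu\partial_t y(t)=-\partial^\circ\mathcal{E}(y(t);\cdot)$ with the minimal-norm element, invoking the argument of [Proposition~2.2, Santambrogio]. The symbol $\nabla$ in \eqref{eq: maineq} is a notational shorthand that the paper explicitly warns ``has to be interpreted as the element of the subdifferential \ldots with minimal norm.'' Your proof as written silently smooths over this and would not produce the correct object at times when the constraint is active. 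A secondary (and much smaller) remark: your worry that the memory convergence $M^{x,x'}_\tau(t)\to M^{x,x'}(t)$ might only hold at all but countably many $t$ is unnecessary --- since $y\in H^1\subset C^{0,1/2}$ the map $t\mapsto\sup_{s<t}|y(s,x')-y(s,x)|$ is already continuous on $[0,T]$, so uniform convergence of $y_\tau$ gives the memory convergence for every $t$, as the paper uses.
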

 Here, $\nabla$ is taken with respect to the variables $y_1,\ldots, y_{\bar{N}}$, i.e., both sides in \eqref{eq: maineq} are vectors in $\R^{\bar{N}n}$.  More explicitly, \eqref{eq: maineq}  can be written as \EEE 
$$\nabla_{y_i} \mathcal{E}(y(t); (y(s))_{s<t})=-\nu\partial_t y(t,x_i) \quad \text{for $i=1,\ldots,\bar{N}$ for a.e.\ $t \in [0,T]$}. $$
 \EEE Although the potentials $W_{x,x'}$ are smooth, $\mathcal{E}(\cdot; (y(s))_{s<t})$ is not necessarily differentiable due to the presence of the memory variable. Therefore, strictly speaking, $\nabla$  has to be interpreted as the element of the subdifferential of $\mathcal{E}$ (in the sense of $\lambda$-convex functions) with minimal norm,  denoted by $\partial^\circ \mathcal{E}$. We refer to \EEE  \eqref{eq: subdiff1}--\eqref{eq: subdiff2} for details.

Let us highlight that the resulting equation is not an ordinary differential equation of gradient flow type but a delay differential equation as the energy  at time $t$ depends on the complete history $(y(s))_{s<t}$.  

\subsection{Variants and possible extensions}\label{sec: variants}

We close this section by discussing  alternative modeling assumptions and possible extensions. \EEE Let us start by presenting a variant of the model where the $L^2$-dissipation is replaced by a dissipation depending on a (discrete) strain rate. This corresponds to a model in the Kelvin-Voigt's rheology.  For simplicity, we restrict our modeling here to the one-dimensional case  ($d=n=1$) \EEE as proofs are  more involved in higher dimensions. Specifically, we denote the reference configuration by $x_1 =  \eps, \EEE \ldots, x_N = \eps N $ with $x_i - x_{i-1} = \eps$ for $i \in \lbrace 2, \ldots,N\rbrace$, and denote deformations $y$ by $y = (y(x_1), \ldots, y(x_N))  \in \R^N\EEE$. Let $\mathcal{L}_{D}(\Omega)  = \lbrace x_1,x_N \rbrace$,  where for convenience we use a different labeling of $\mathcal{L}_{D}(\Omega)$ compared to Subsection \ref{sec: result}. \EEE We introduce the \emph{discrete gradient} by 
\begin{align}\label{eq: dg}
\bar{\nabla} y = \Big(\frac{y(x_i)- y(x_{i-1})}{\eps}\Big)_{i=2,\ldots,N} \in \R^{N-1}.
\end{align}
We also introduce the vector $\bar{\rm D} y \in \R^{N-2}$ defined by 
\begin{equation}\label{def: VektorD}
  (\bar{\rm D} y)_i =    \frac{2y(x_i)- y(x_{i-1}) - y(x_{i+1})}{\eps}, i \in \lbrace 2, \ldots, N-1\rbrace. \end{equation}
We replace the time-incremental scheme described in \eqref{eq: MM} 
by
\begin{align}\label{eq: the new scheme}
  \min_{y \in \mathcal{A}(g^\tau_k)}   \Big( \mathcal{E}(y; (y^\tau_j)_{j<k})  +   \frac{\nu}{2 \tau}  |\bar{\nabla} y - \bar{\nabla} y_{k-1}^\tau |^2   \Big). 
\end{align} 
In the limit of vanishing time steps $\tau \to 0$, this leads  to the delay differential equation
\begin{align}\label{eq: new model}
\partial^\circ \mathcal{E}(y(t); (y(s))_{s<t})=-\nu \bar{\rm D} \partial_t y(t) \quad \text{ on $\lbrace x_2,\ldots, x_{N-1} \rbrace$ \EEE for a.e.\ $t \in [0,T]$},
\end{align}
where $\partial^\circ \mathcal{E}$ is explained in \eqref{eq: subdiff1}--\eqref{eq: subdiff2} and is taken with respect to  $(y_2,\ldots, y_{N-1})$.  The proof is similar to the one of Theorem \ref{th: main result} and the necessary adaptions are  given \EEE in Remark \ref{rem: adapt}. Let us mention that a more realistic modeling assumption would be that $\nu$ is different for each contribution and depends on the memory variable  $M_{x,x'}$, with $\nu(M_{x,x'})$ decreasing in $M_{x,x'}$. \\

The model developed so far was restricted to pair interactions  for the sake of simplicity. However, for a realistic description of  brittle materials also  multi-body interactions should be taken into account. Here, we present one possible simple extension incorporating three-body interactions.
Let  $x,x',x''\in \mathcal{L}(\Omega)$ be three distinct neighboring atoms in the reference configuration with $|x'-x|=\varepsilon$ and $|x'-x''|=\varepsilon$. Consider the angle between the two edges $(x',x)$ and $(x',x'')$ under the deformation $y$, given  by \[\theta_{x,x',x''}(y):=\arccos \big\langle y(x)-y(x'), y(x'')-y(x') \big\rangle \in [0,\pi]\,.\] 
Let   $W_{x,x',x''}\colon[0,\pi] \to \R$ be a continuous three-body interaction potential, see \cite{brenner, ters, still} for possible choices. We then assume that the three-body energy contribution of $x,x',x''\in \mathcal{L}(\Omega)$ 
has the form
\[ E_{x,x',x''}(y, (y_s)_{s<t}):=W_{x,x',x''}\big(\theta_{x,x',x''}(y)\big)\Big[1-\big(\varphi\big(M_{x,x'}((y_s)_{s<t})\big)\vee \varphi\big(M_{x',x''}((y_s)_{s<t})\big)\big)\Big]\,.\]
This reflects the assumption that the three-body-interaction is only active as long as the two interactions $(x,x')$ and $(x',x'')$ are not 'damaged'.
The total energy in \eqref{eq: main energy} then changes to 
\begin{align}\label{eq: alternative energy}
    \mathcal{E}(y; (y_s)_{s<t}) = \frac{1}{2}\sum_{\underset{x \neq x'}{x, x' \in \mathcal{L}(\Omega)}} {E}_{x,x'}(y; (y_s)_{s<t}) +  \sum_{\underset{|x-x'|=|x''-x'|=\varepsilon}{x, x',x'' \in \mathcal{L}(\Omega), x \neq x''}}E_{x,x',x''}(y, (y_s)_{s<t})\,.
\end{align}    
Note that Theorem \ref{th: main result} stated above still holds for this extended model, as we briefly indicate after the proof of Theorem \ref{th: main result}, see Remark \ref{rem: new}.

\EEE

\section{Examples and numerical experiments}\label{sec: examples}

The proposed model for discrete crack evolution captures a variety of possible atomic interactions. In preparation for our numerical experiments, we discuss here simple models in 1D and 2D.

 The easiest case consists in a one-dimensional set-up described at the end of Subsection \ref{sec: result}  with  nearest neighbor interactions.  If $|x-x'|\neq \eps$  it holds \EEE $W_{x, x'} \equiv 0$, and for $|x-x'|=\eps$ the interaction takes the form  
\begin{align}\label{eq: NNI}
  W_{x, x'}(|y(x) - y(x')|) := W(|y(x) - y(x')|), \qquad \text{where }
  W(r) :=   4\left( \Big(\frac{\eps}{\sqrt[6]{2} r}\Big)^{12} - \Big(\frac{\eps}{\sqrt[6]{2} r}\Big)^6 \right).
\end{align}
We also choose some parameters $\eps < R^1 < R^2$ for the memory variable independently of $(x,x')$. \EEE

In two dimensions, an important mathematical model is given by a triangular lattice $\mathcal{L}=  \eps \EEE A\mathbb{Z}^2=  \eps \EEE \begin{pmatrix}
  1 & \frac{1}{2} \\ 
  0 & \frac{\sqrt{3}}{2}
\end{pmatrix} \mathbb{Z}^2$ 
and  nearest neighbor \EEE interactions of the form \eqref{eq: NNI}.  This model was considered in \cite{FS151}.  Here, the deformations $y$ are assumed to locally preserve their orientation, i.e.,  affine interpolation should have nonnegative determinant on each triangle of the lattice $\mathcal{L}$, cf.\ \cite{Schmidt:2009}.  Additionally, one could also add next-to-nearest neighbors to the energy, i.e., the interaction of points $x$, $x'$ of the lattice opposite to the same edge (with distance $\sqrt{3} \eps$). In this case, we add 
\begin{align}\label{eq: NNIXX}
W_{x, x'}(|y(x) - x(x')|) = \eta W(\tfrac{1}{\sqrt{3}}|y(x) - y(x')|) 
\end{align}
to the energy for each such pair, where $\eta >0$ and $W$ is  given by \eqref{eq: NNI}. \EEE 
\EEE

%

%

\begin{figure}[h]
  \centering
  \begin{subfigure}[b]{0.49\textwidth}
      \centering
      \includegraphics[width=\textwidth]{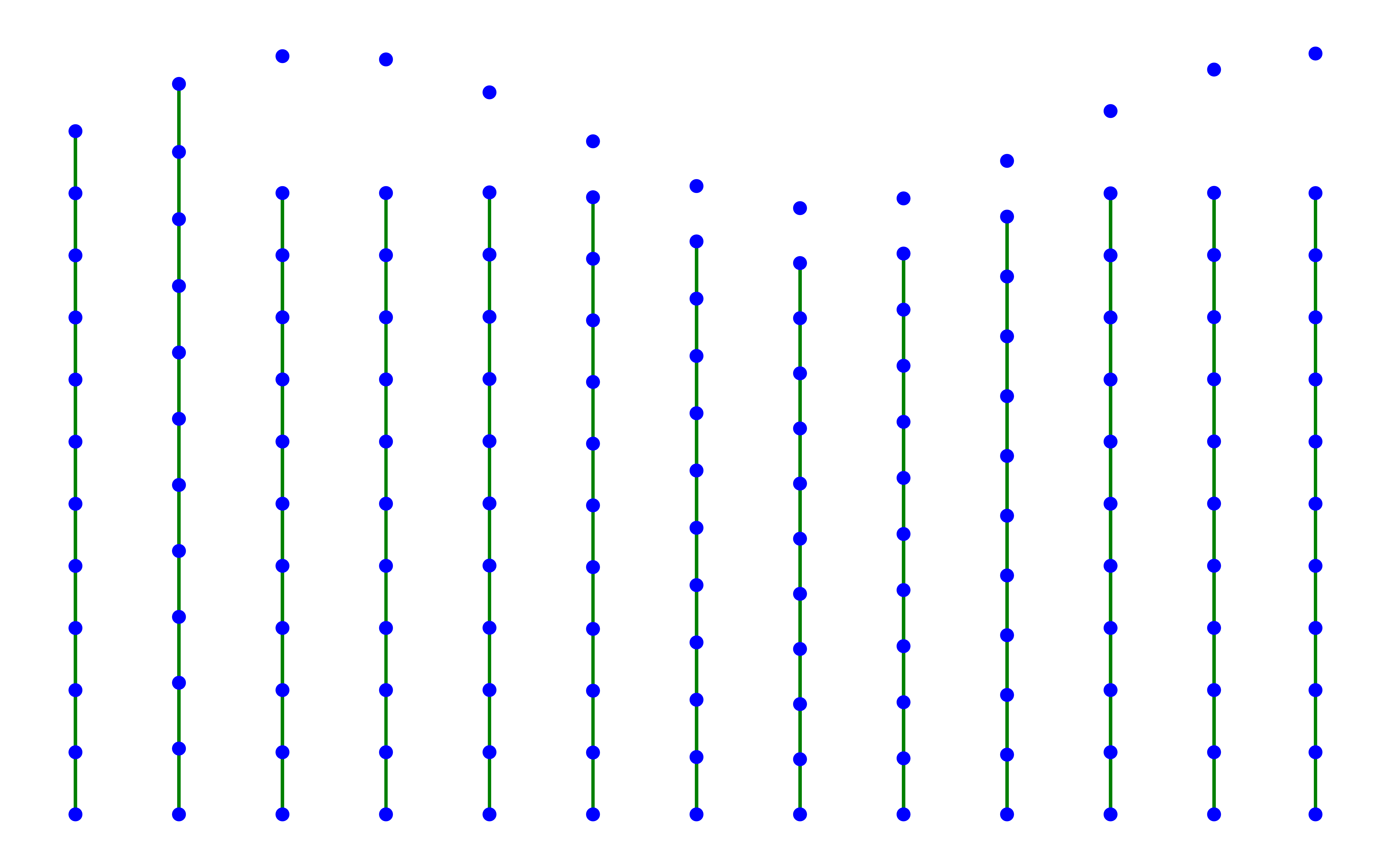}
      \caption{$L^2$}
      \label{fig:line_l2}
  \end{subfigure}
  \hfill
  \begin{subfigure}[b]{0.49\textwidth}
      \centering
      \includegraphics[width=\textwidth]{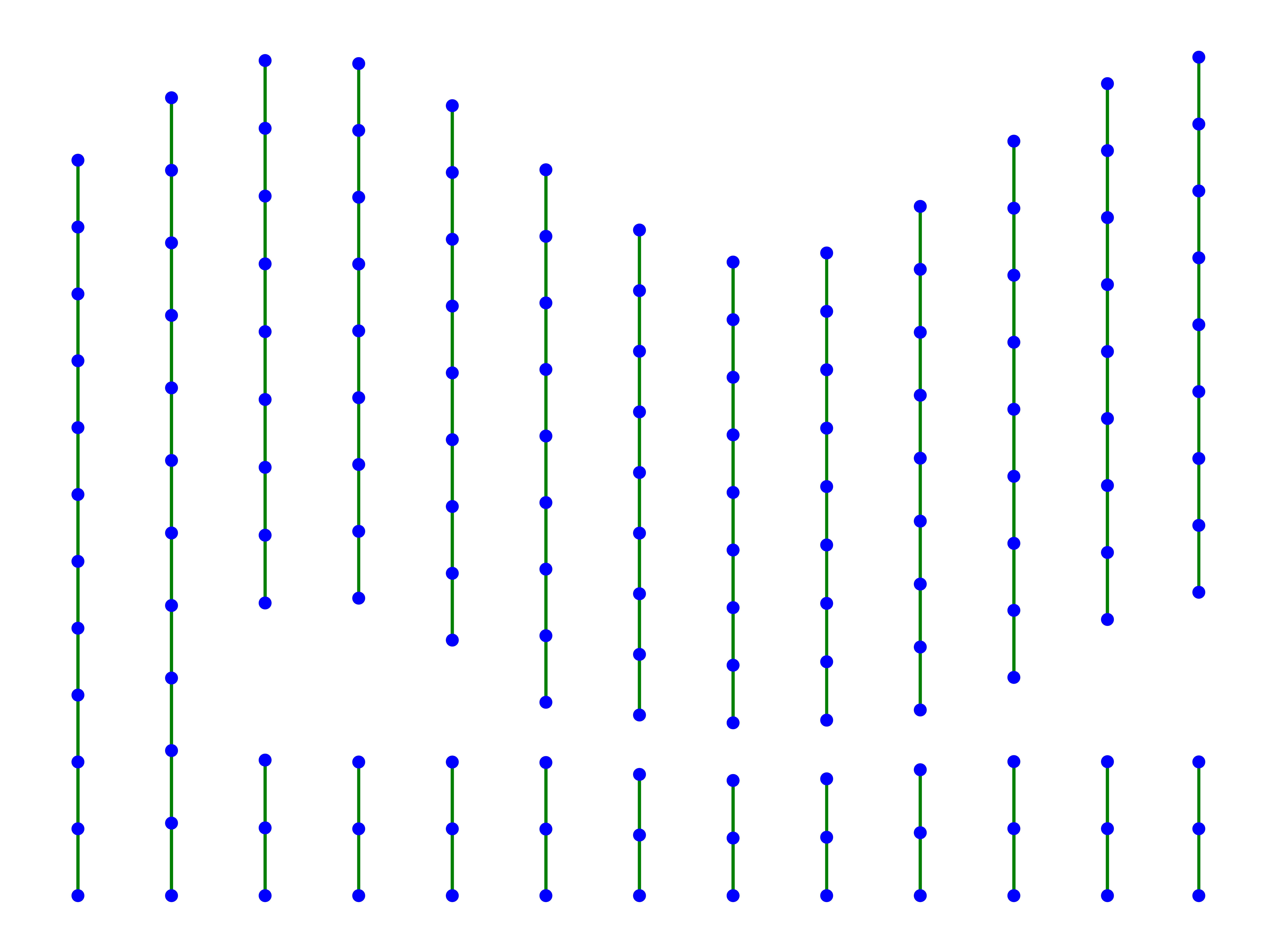}
      \caption{Kelvin-Voigt}
      \label{line_kv}
  \end{subfigure}
     \caption{Evolution of a chain of atoms. The blue dots in each column describe the deformed configuration of atoms for one specific step of the scheme with time progressing towards the right. A green line is drawn between each nearest-neighbor pair until their distance goes beyond the elastic threshold.}
     \label{fig:line}
\end{figure}

We now describe numerical experiments for the 1D and the 2D case. In dimension one, we consider a chain of $N$ equally spaced atoms with interatomic distance $\eps$.
To ease numerical computations, we only consider interaction energies \eqref{eq: main energy} of nearest-neighbor type  as given in \eqref{eq: NNI}. \EEE We set $R^1 =  1.2 \EEE\eps$, and for simplicity $R^2 = R^1$. As dissipation, we consider both cases mentioned above,   i.e., the $L^2$-distance and the Kelvin-Voigt-dissipation.  We set $\tau = 1/60$ and $\nu = 0.1$. \EEE    The numerical computation of each step of the minimizing movements schemes in \eqref{eq: MM} and \eqref{eq: the new scheme} is performed via an interior point optimizer  \cite{madnlp} accessed through the JuMP modeling language  \cite{jump}.

In Figure \ref{fig:line}, one can see  13 \EEE steps of both schemes.  The evolution starts with the chain of atoms with distance $\eps$ to each other.
The boundary condition fixes one of the endpoints while driving the other endpoint in sinusoidal motion stretching and compressing the chain in the process.
In both cases,  the chain first extends elastically and at some point \EEE one of the bonds between neighboring atoms breaks.
While in the case of Kelvin-Voigt, by symmetry, any of the bonds is equally likely to break (in the specific case above it is the $3$-th bond),   the $L^2$-dissipation always favors the very last bond  since already for small loading this bond is  stretched the most. After compression, in the second turn of extension, no elastic stretching can be observed  since one bond is already broken. \EEE

  \begin{figure}[h]
  \centering
  \begin{subfigure}[b]{0.298\textwidth}
      \centering
      \includegraphics[width=\textwidth]{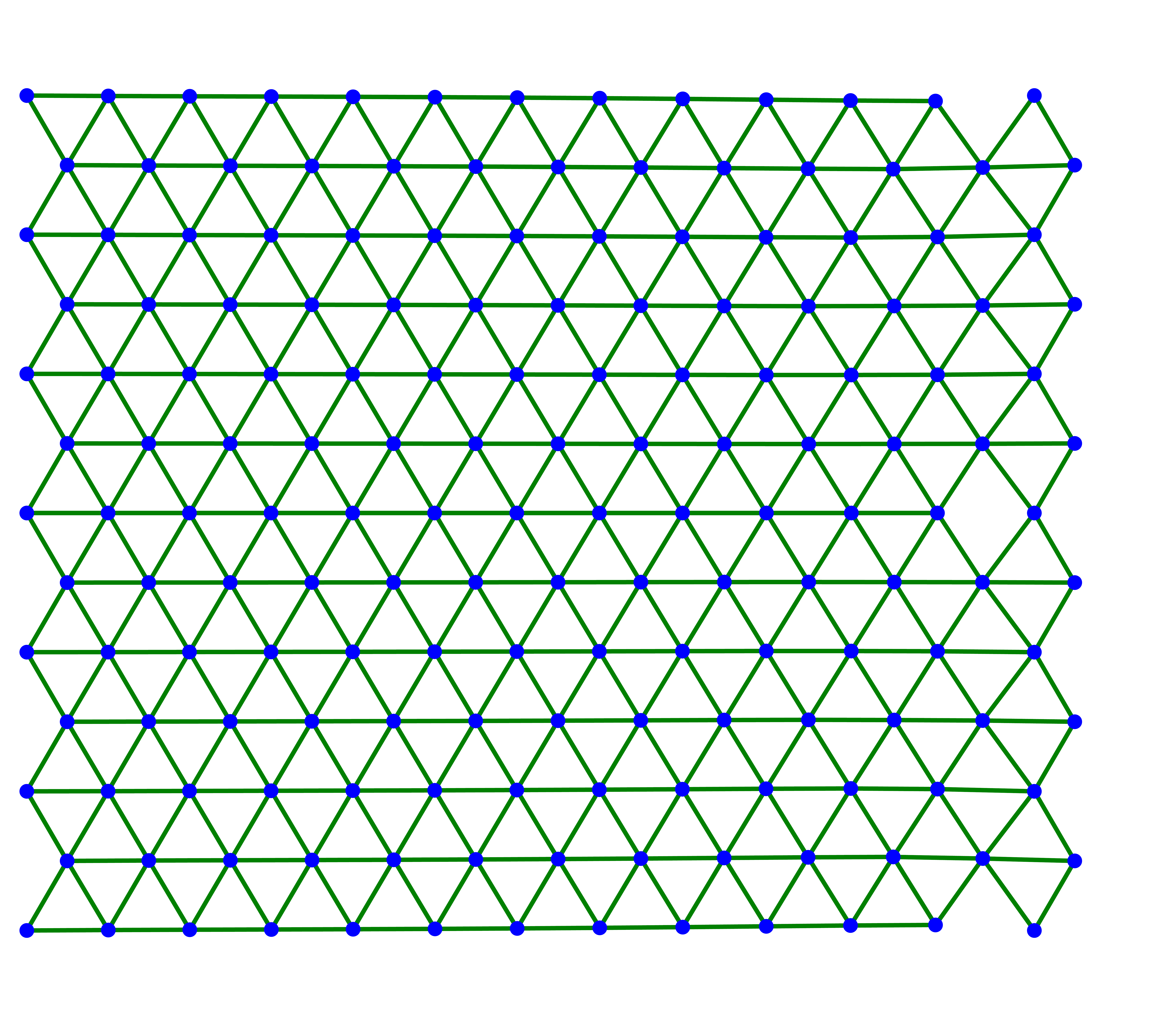}
      \caption{Step 11}
      \label{fig:square_l2_1}
  \end{subfigure}
  \hfill
  \begin{subfigure}[b]{0.298\textwidth}
      \centering
      \includegraphics[width=\textwidth]{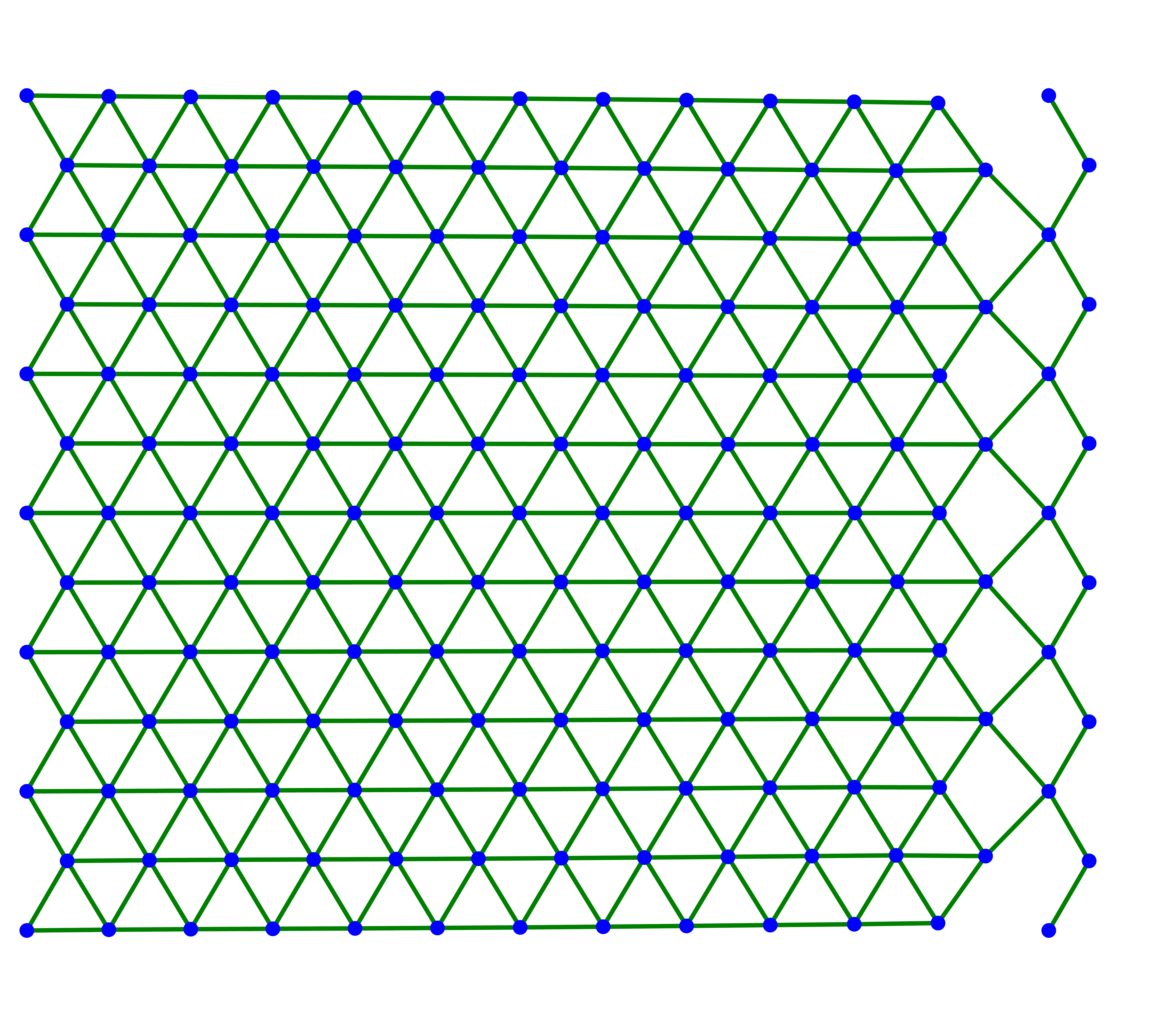}
      \caption{Step 15}
      \label{fig:square_l2_2}
  \end{subfigure}
  \hfill
  \begin{subfigure}[b]{0.298\textwidth}
      \centering
      \includegraphics[width=\textwidth]{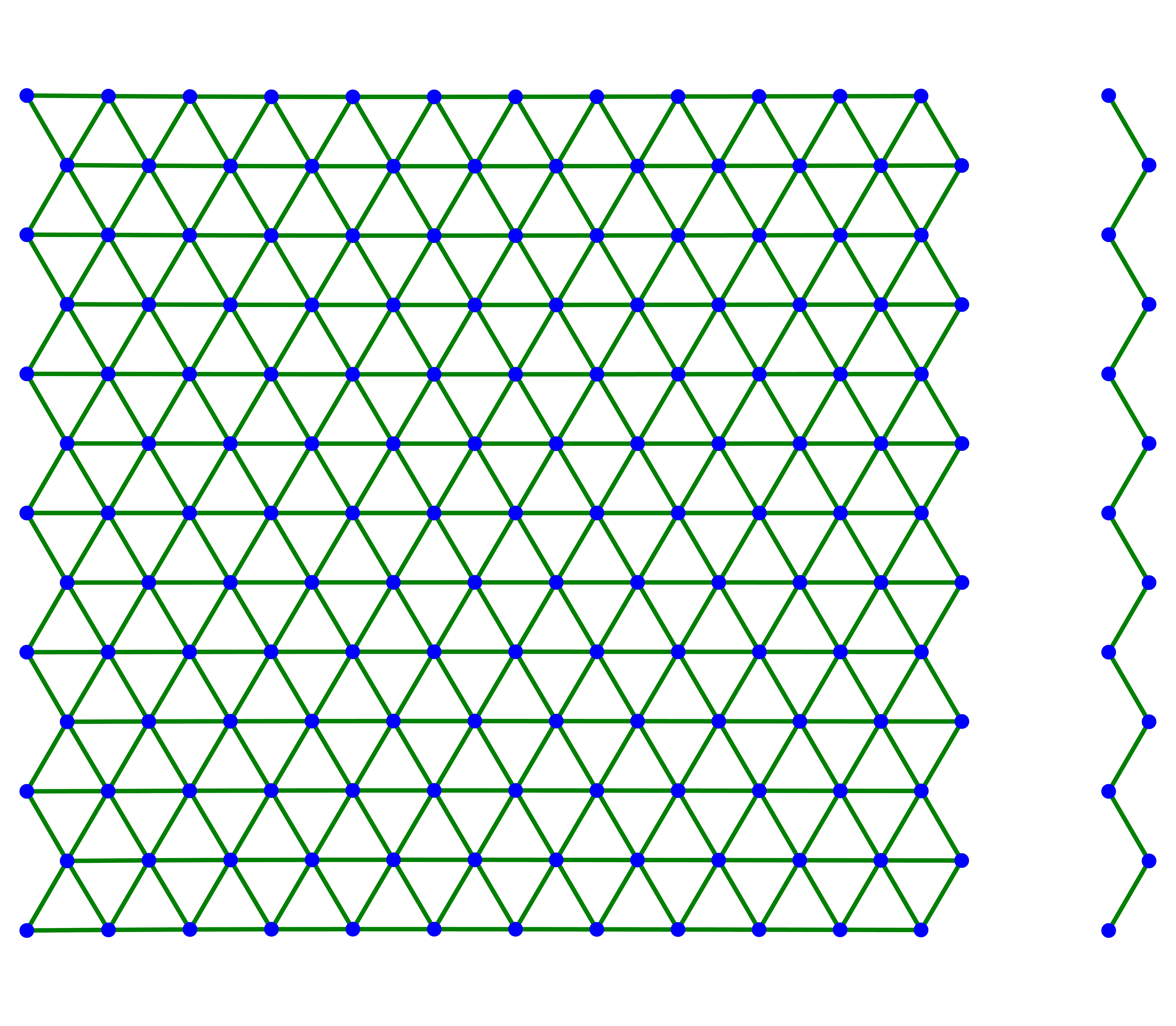}
      \caption{Step 45}
      \label{fig:square_l2_3}
  \end{subfigure} \\
  \begin{subfigure}[b]{0.298\textwidth}
      \centering
      \includegraphics[width=\textwidth]{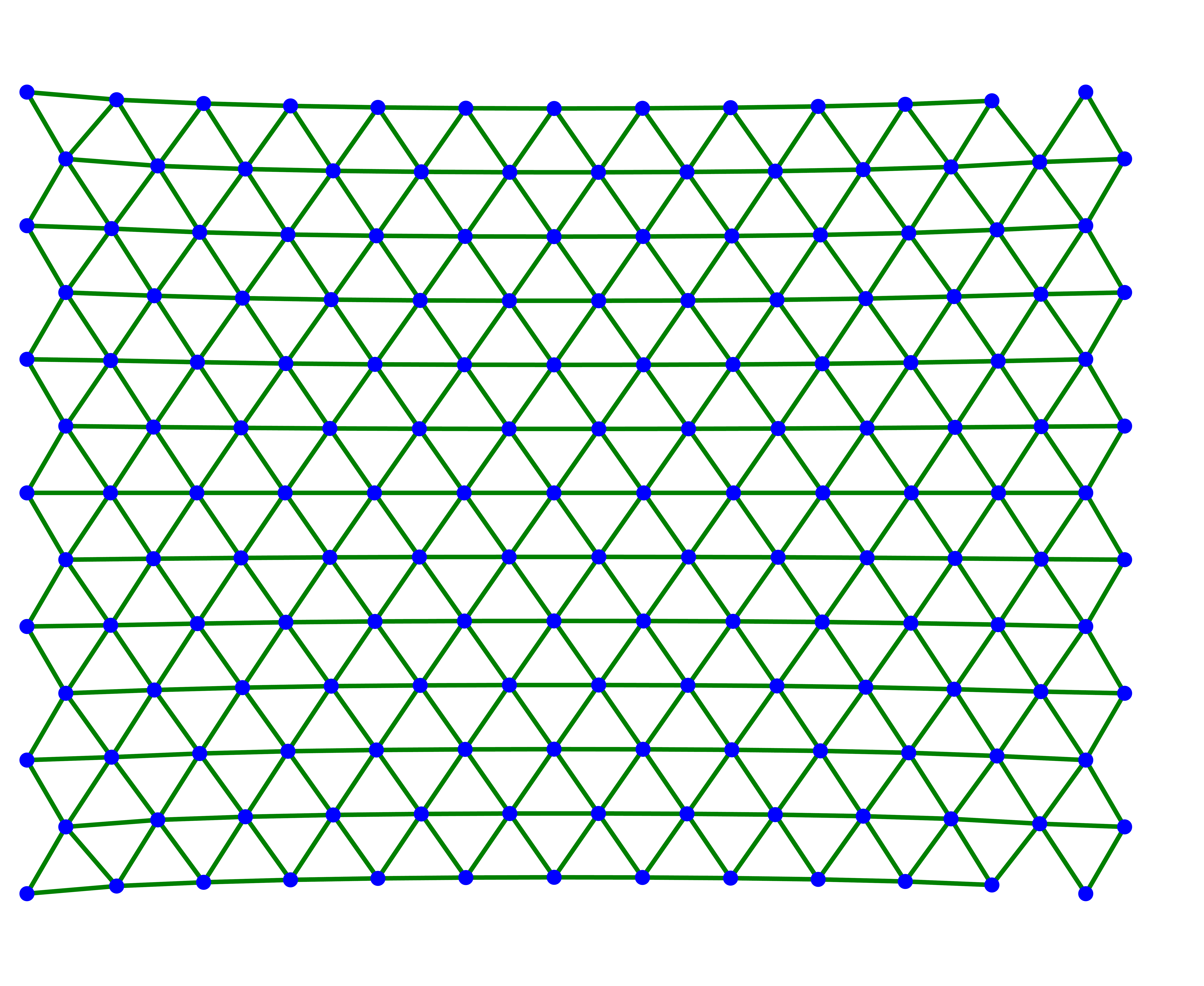}
      \caption{Step 16}
      \label{fig:square_kv_1}
  \end{subfigure}
  \hfill
  \begin{subfigure}[b]{0.298\textwidth}
      \centering
      \includegraphics[width=\textwidth]{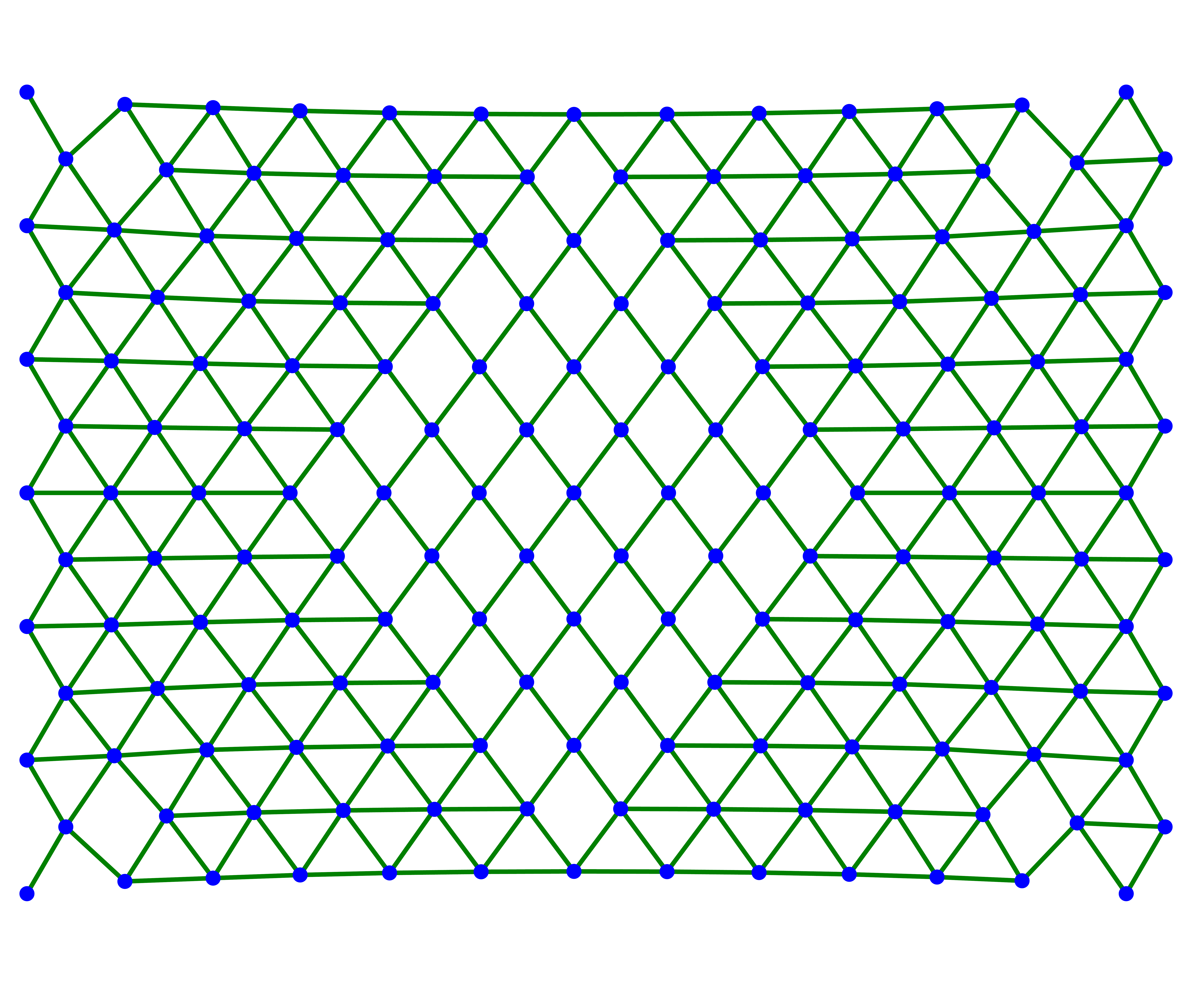}
      \caption{Step 24}
      \label{fig:square_kv_2}
  \end{subfigure}
  \hfill
  \begin{subfigure}[b]{0.298\textwidth}
      \centering
      \includegraphics[width=\textwidth]{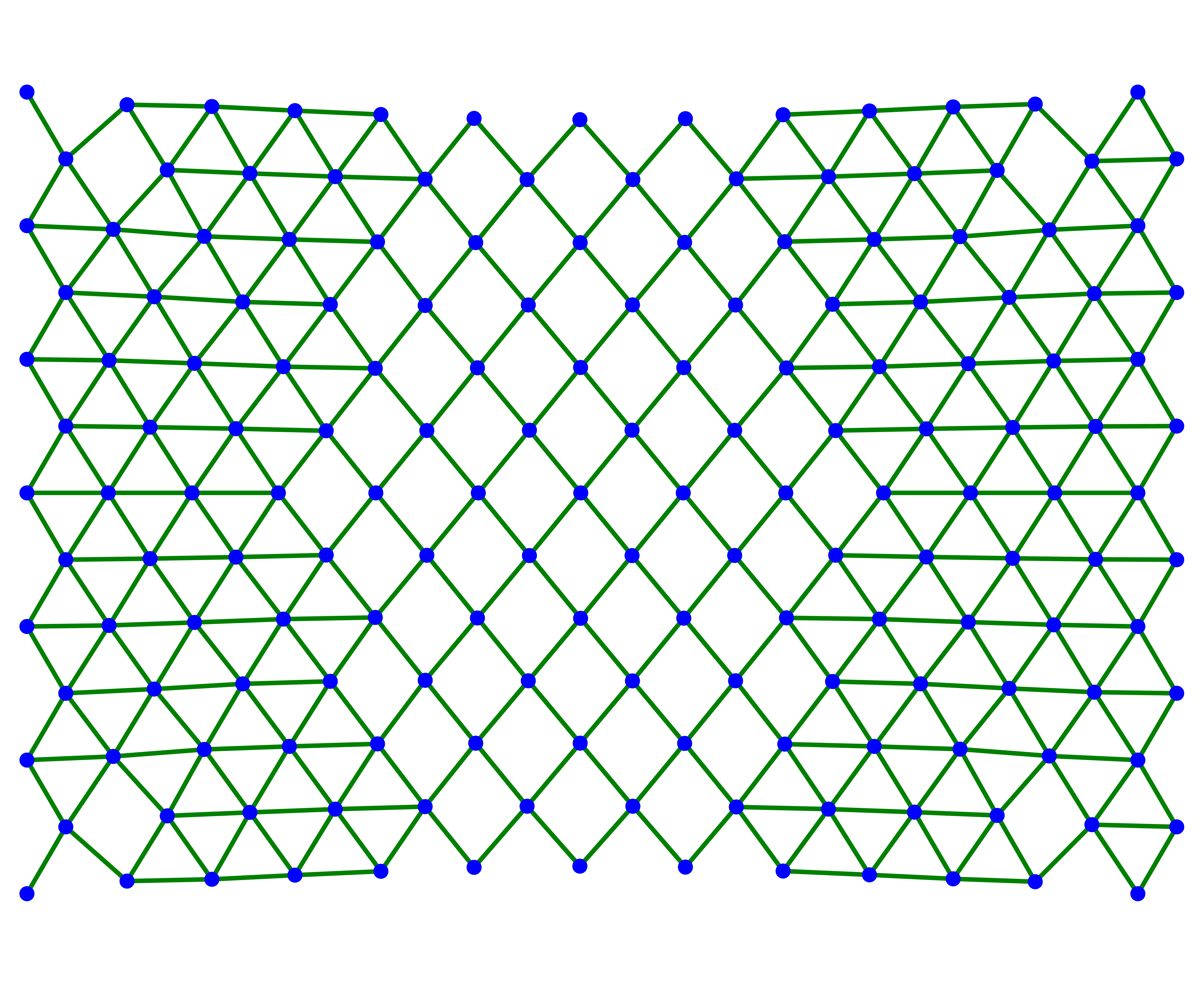}
      \caption{Step 30}
      \label{fig:square_kv_3}
  \end{subfigure}
    \caption{Evolution of a triangular lattice for large $\nu = 1$.  Green lines again correspond to bonds in the elastic regime. \EEE }
    \label{fig:square_extr}
\end{figure}
\begin{figure}
    \label{fig: stress_strain}
\end{figure}

\begin{figure}[h!]
  \centering
  \begin{subfigure}[b]{0.298\textwidth}
      \centering
      \includegraphics[width=\textwidth]{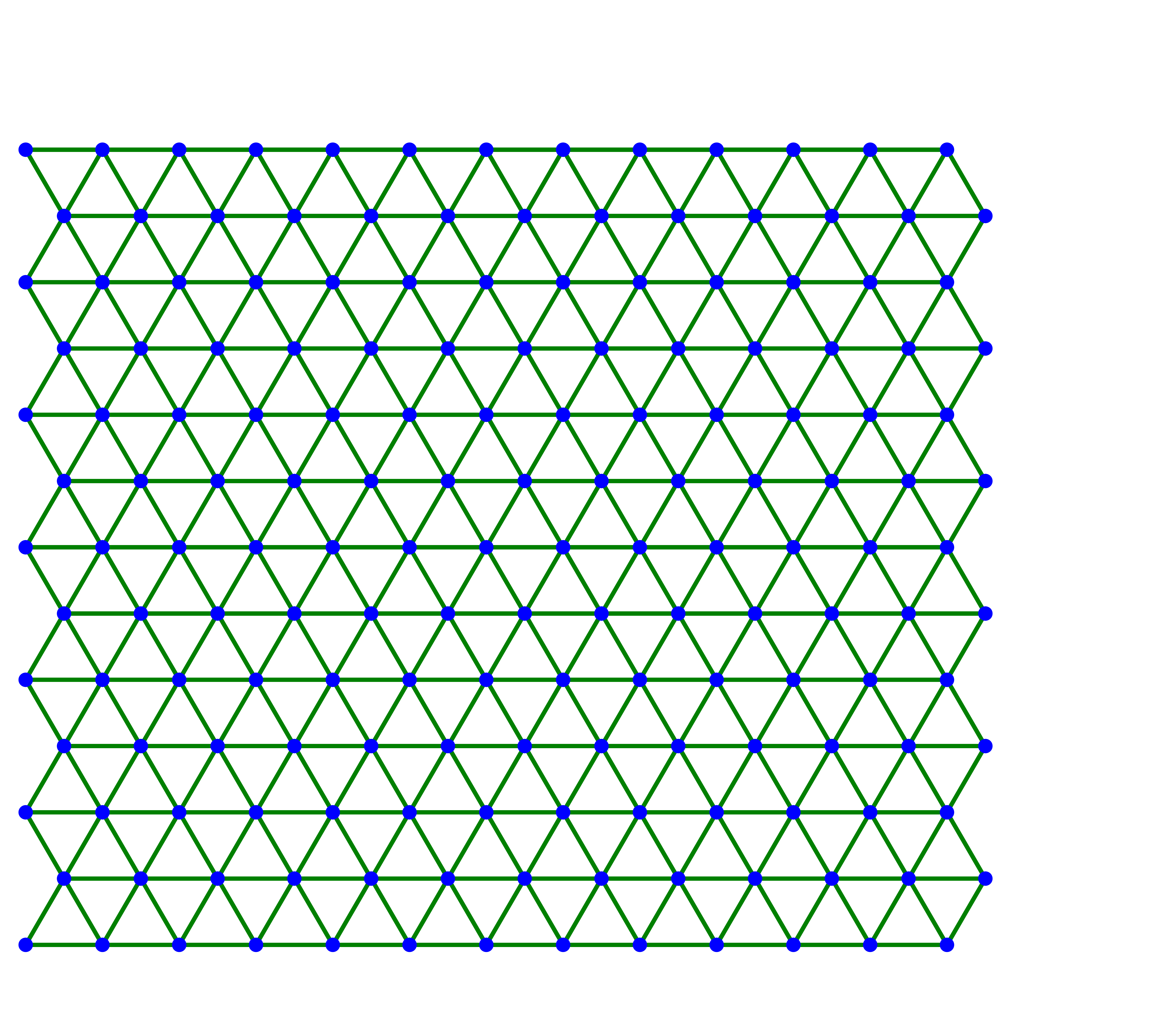}
      \caption{Step 1}
      \label{fig:square_diag_1}
  \end{subfigure}
  \begin{subfigure}[b]{0.298\textwidth}
      \centering
      \includegraphics[width=\textwidth]{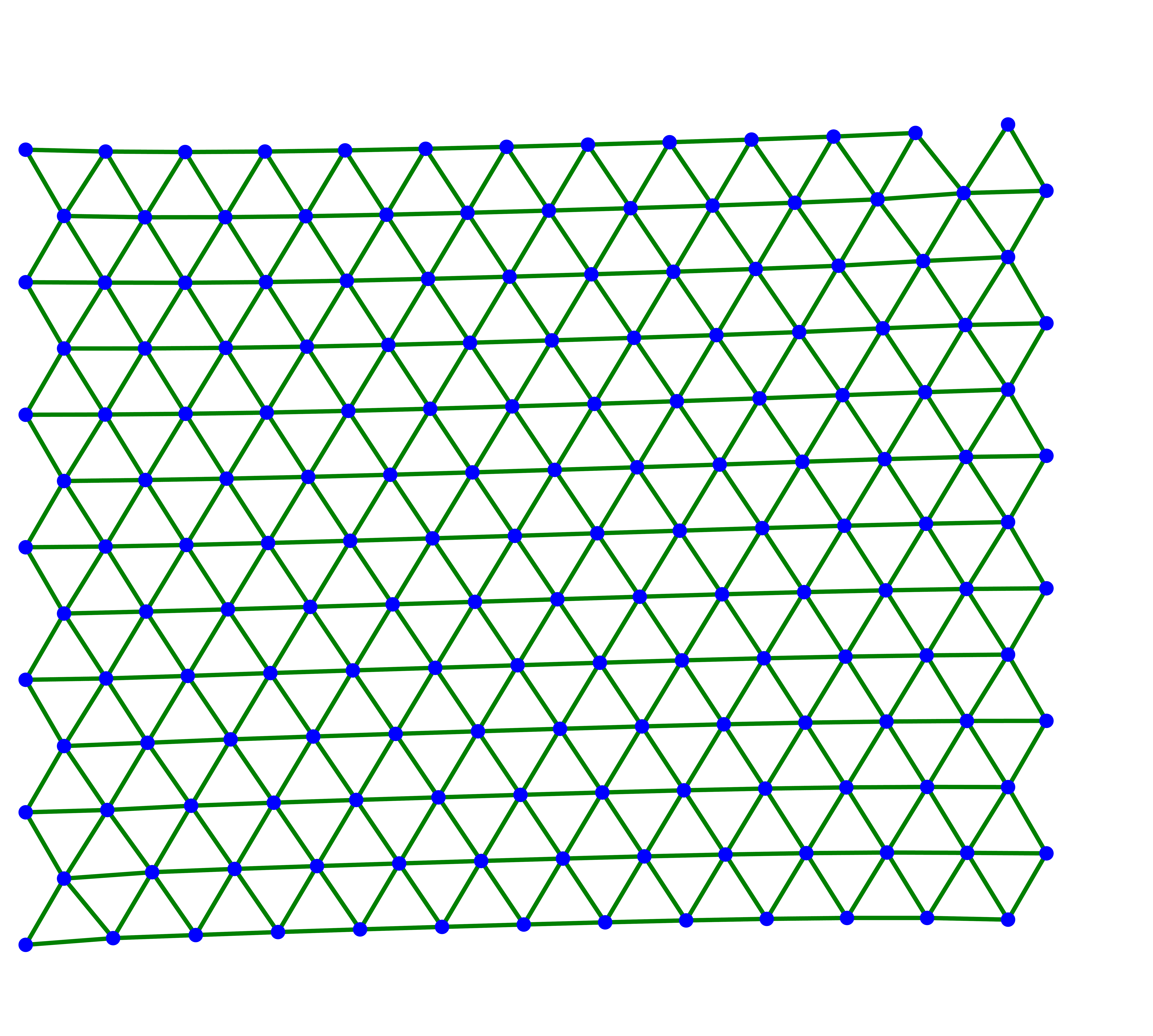}
      \caption{Step 18}
      \label{fig:square_diag_2}
  \end{subfigure}
  \hfill
  \begin{subfigure}[b]{0.298\textwidth}
      \centering
      \includegraphics[width=\textwidth]{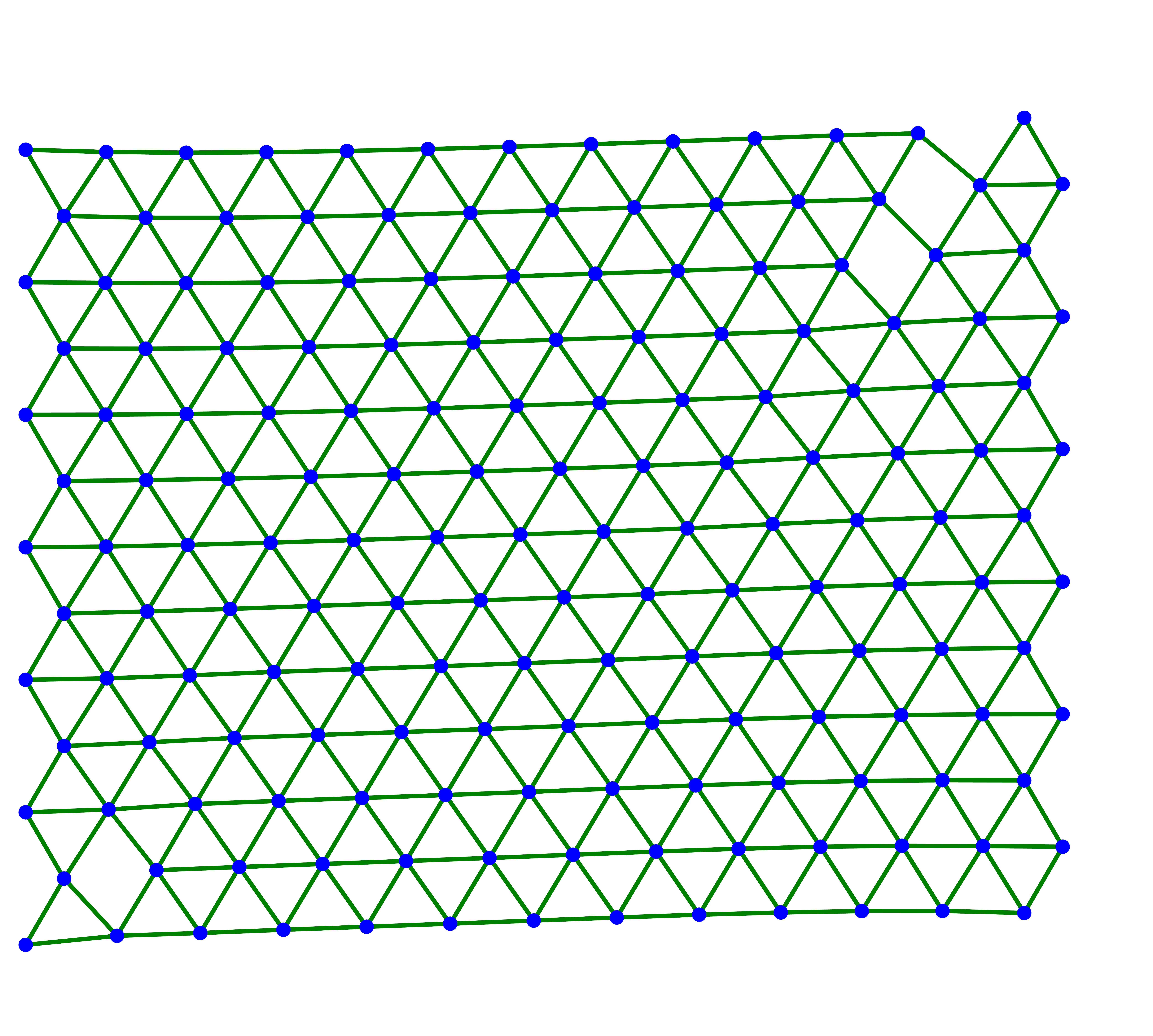}
      \caption{Step 23}
      \label{fig:square_diag_3}
  \end{subfigure} \\
  \begin{subfigure}[b]{0.298\textwidth}
      \centering
      \includegraphics[width=\textwidth]{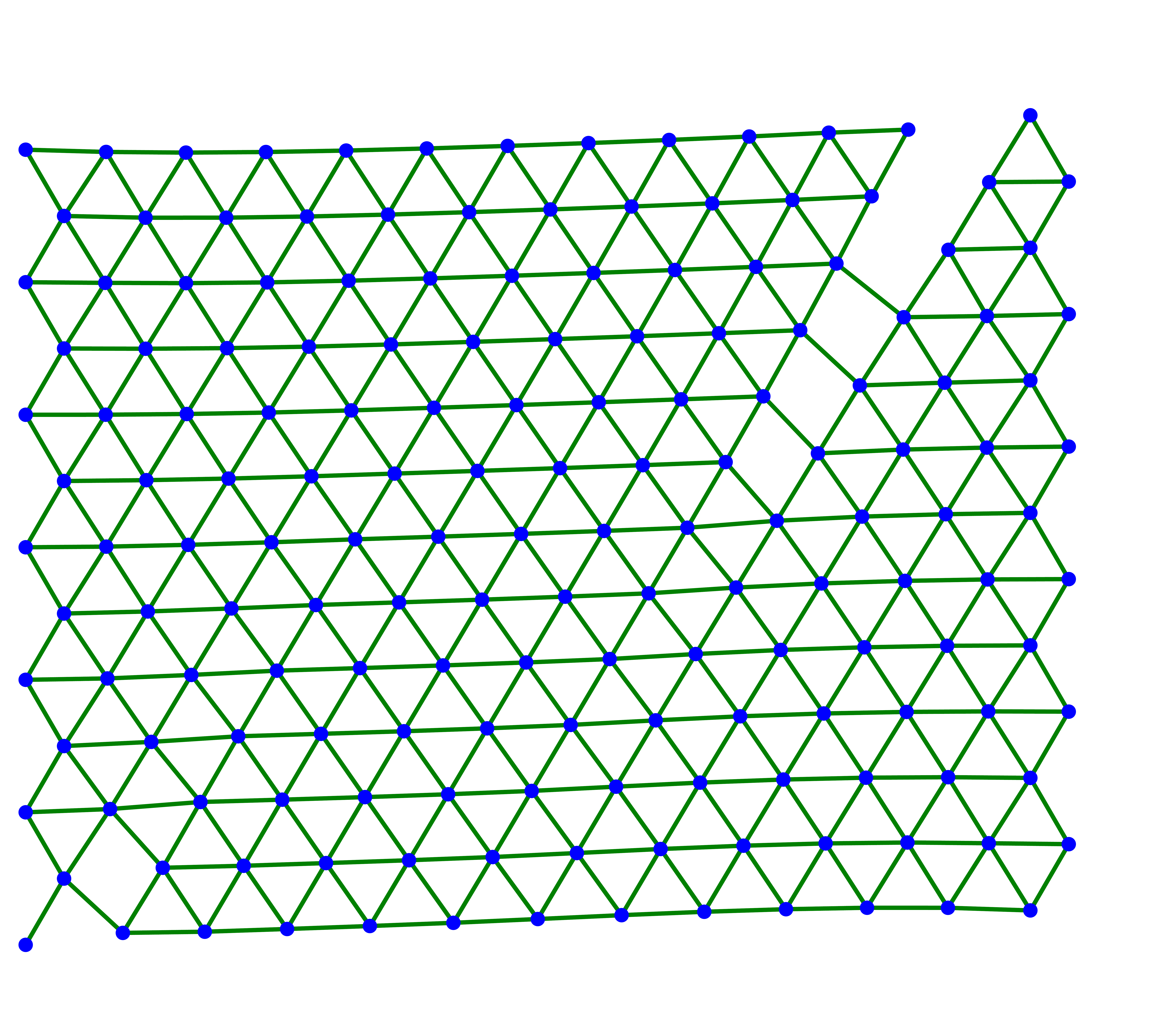}
      \caption{Step 25}
      \label{fig:square_diag_4}
  \end{subfigure}
  \hfill
  \begin{subfigure}[b]{0.298\textwidth}
      \centering
      \includegraphics[width=\textwidth]{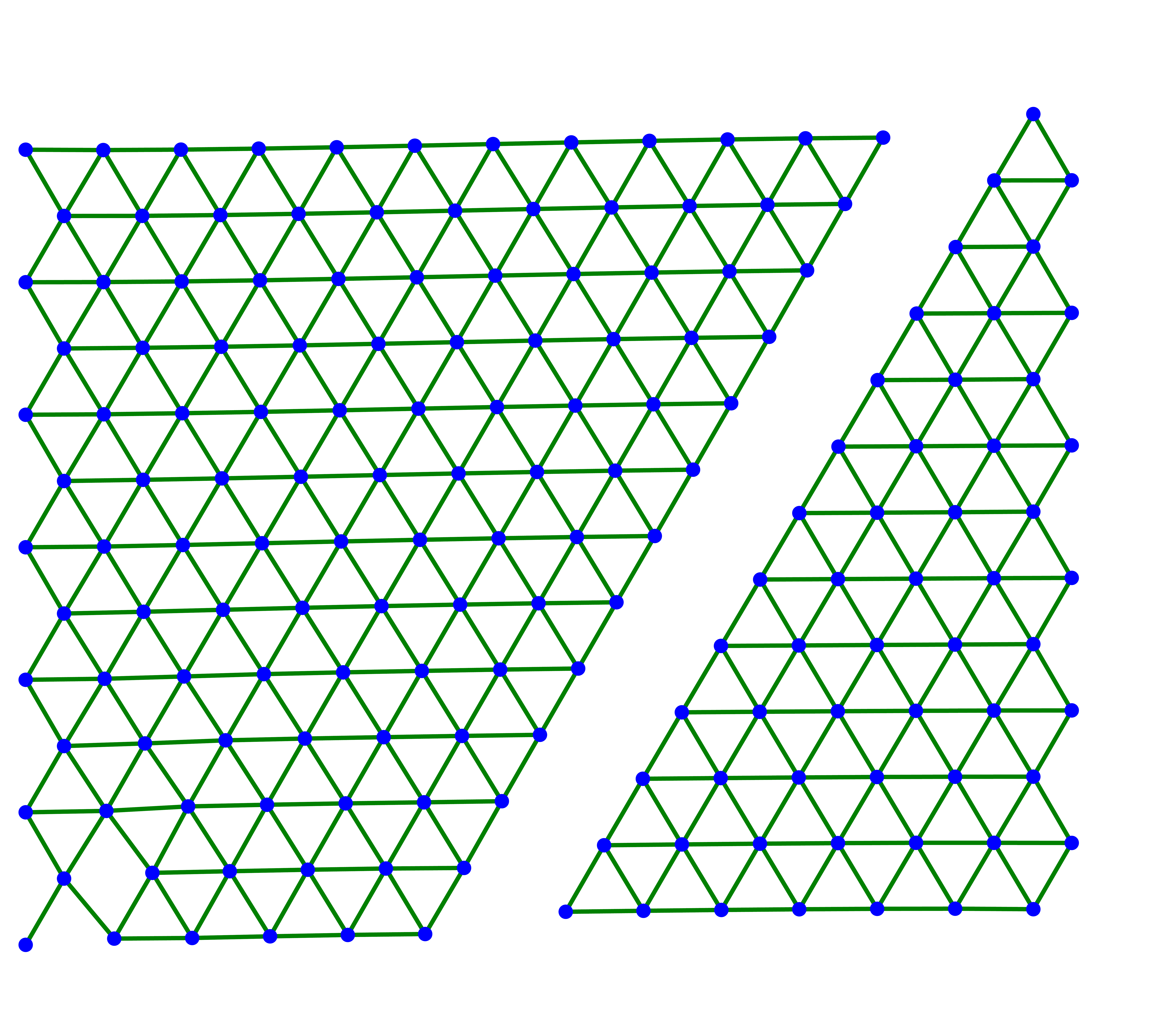}
      \caption{Step 26}
      \label{fig:square_diag_5}
  \end{subfigure}
  \hfill
  \begin{subfigure}[b]{0.298\textwidth}
      \centering
      \includegraphics[width=\textwidth]{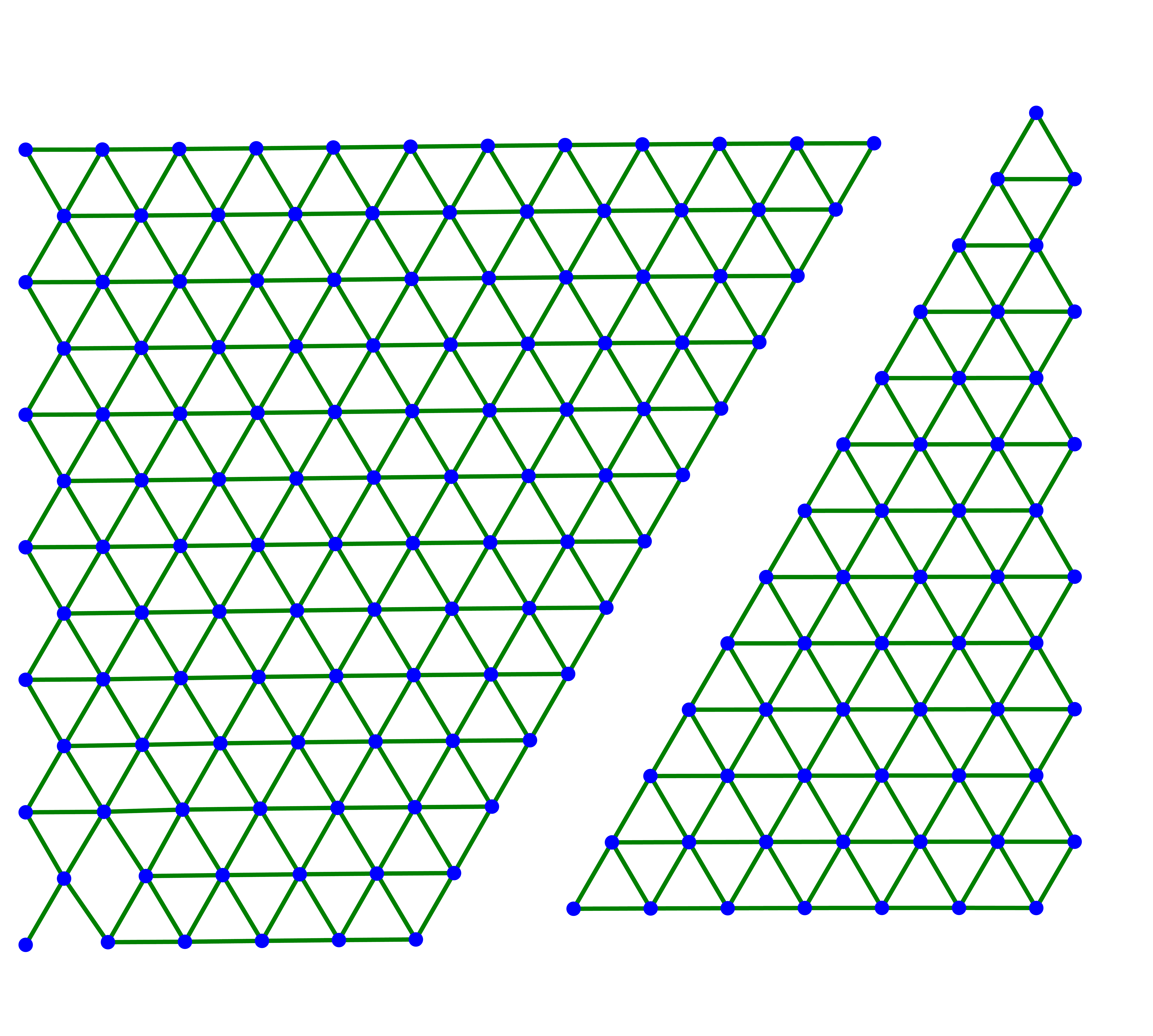}
      \caption{Step 27}
      \label{fig:square_diag_6}
  \end{subfigure} \\
  \begin{subfigure}[b]{0.298\textwidth}
      \centering
      \includegraphics[width=\textwidth]{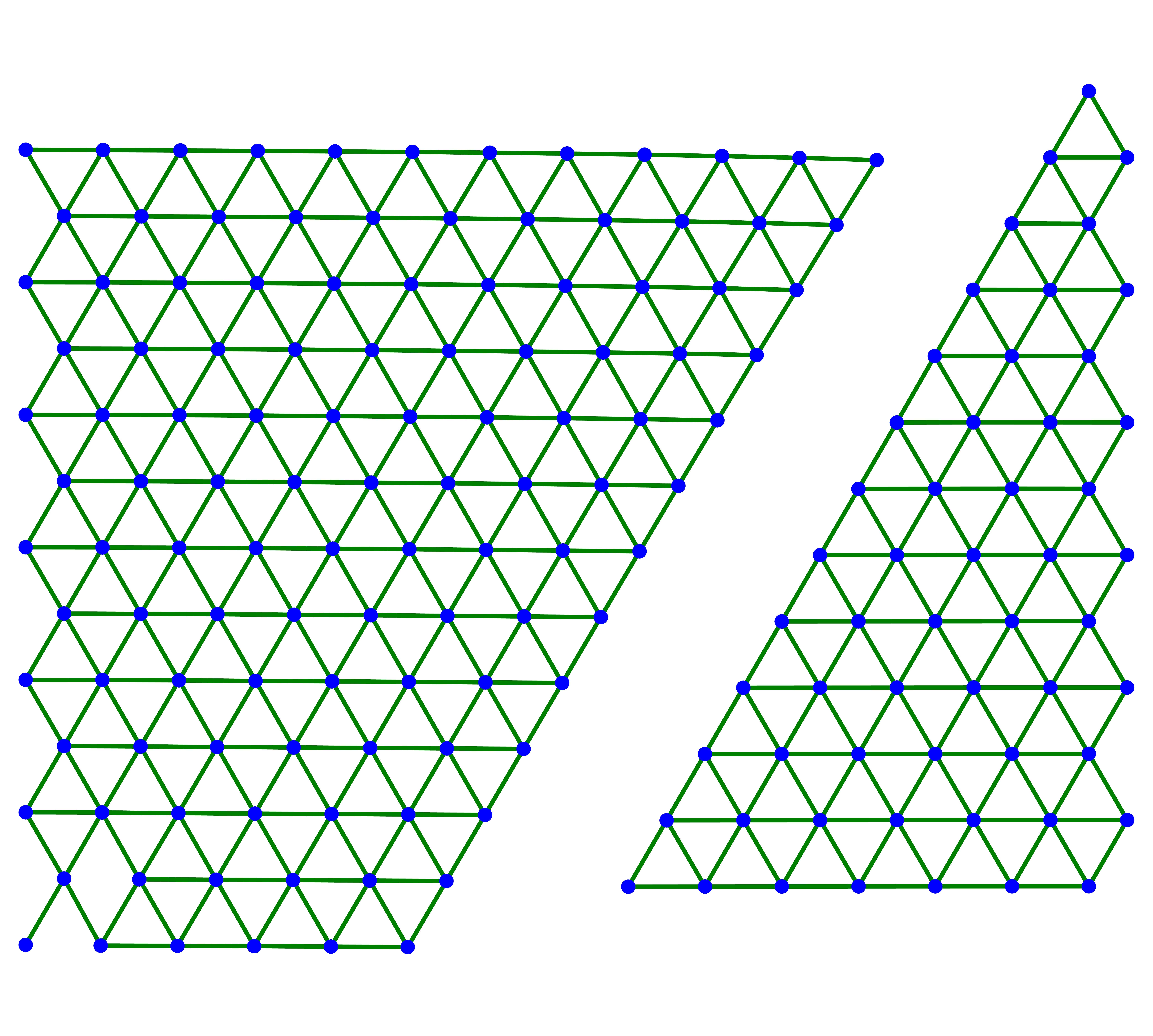}
      \caption{Step 60}
      \label{fig:square_diag_7}
  \end{subfigure}
  \hfill
  \begin{subfigure}[b]{0.298\textwidth}
      \centering
      \includegraphics[width=\textwidth]{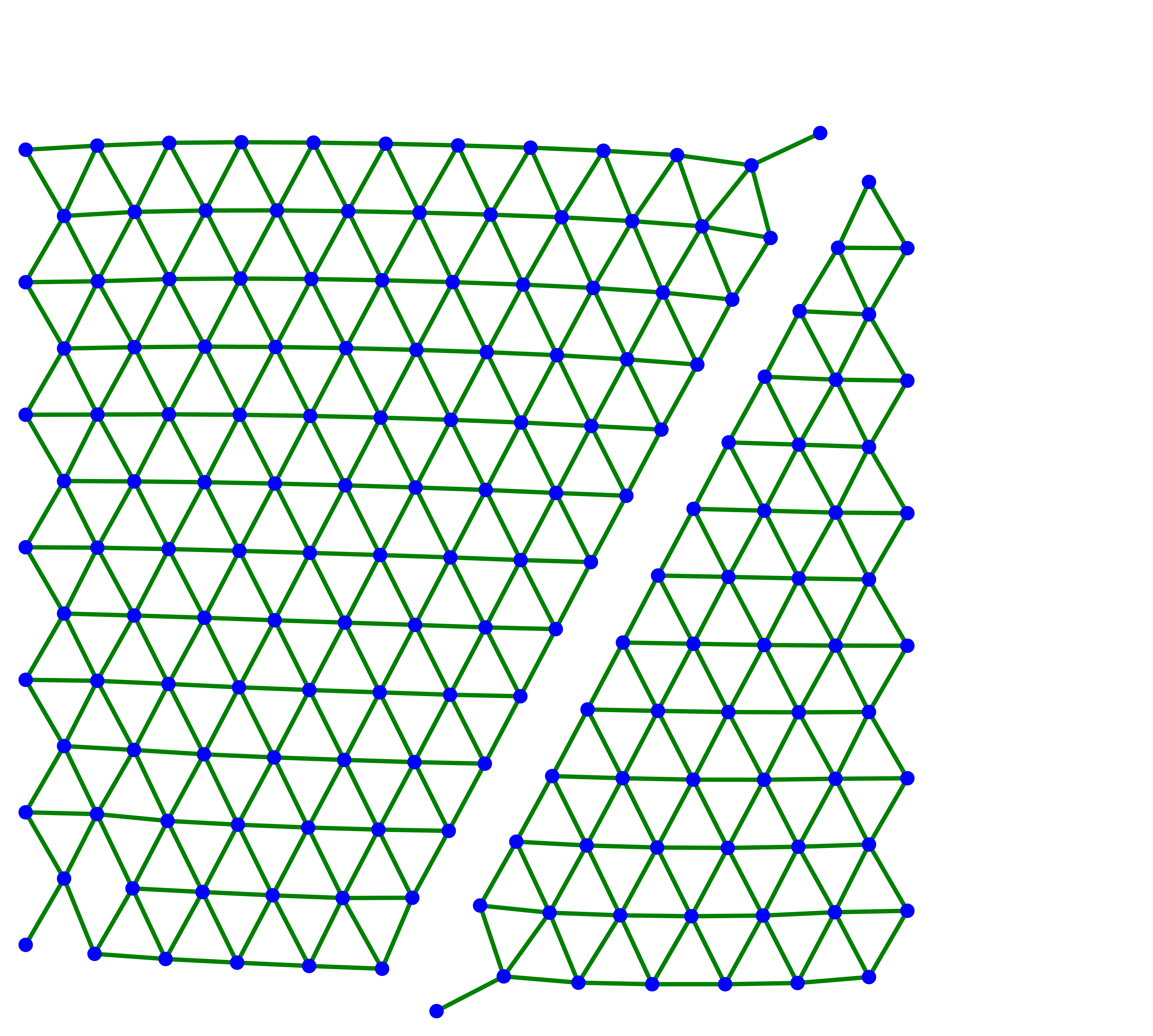}
      \caption{Step 180}
      \label{fig:square_diag_8}
  \end{subfigure}
  \hfill
  \begin{subfigure}[b]{0.298\textwidth}
      \centering
      \includegraphics[width=\textwidth]{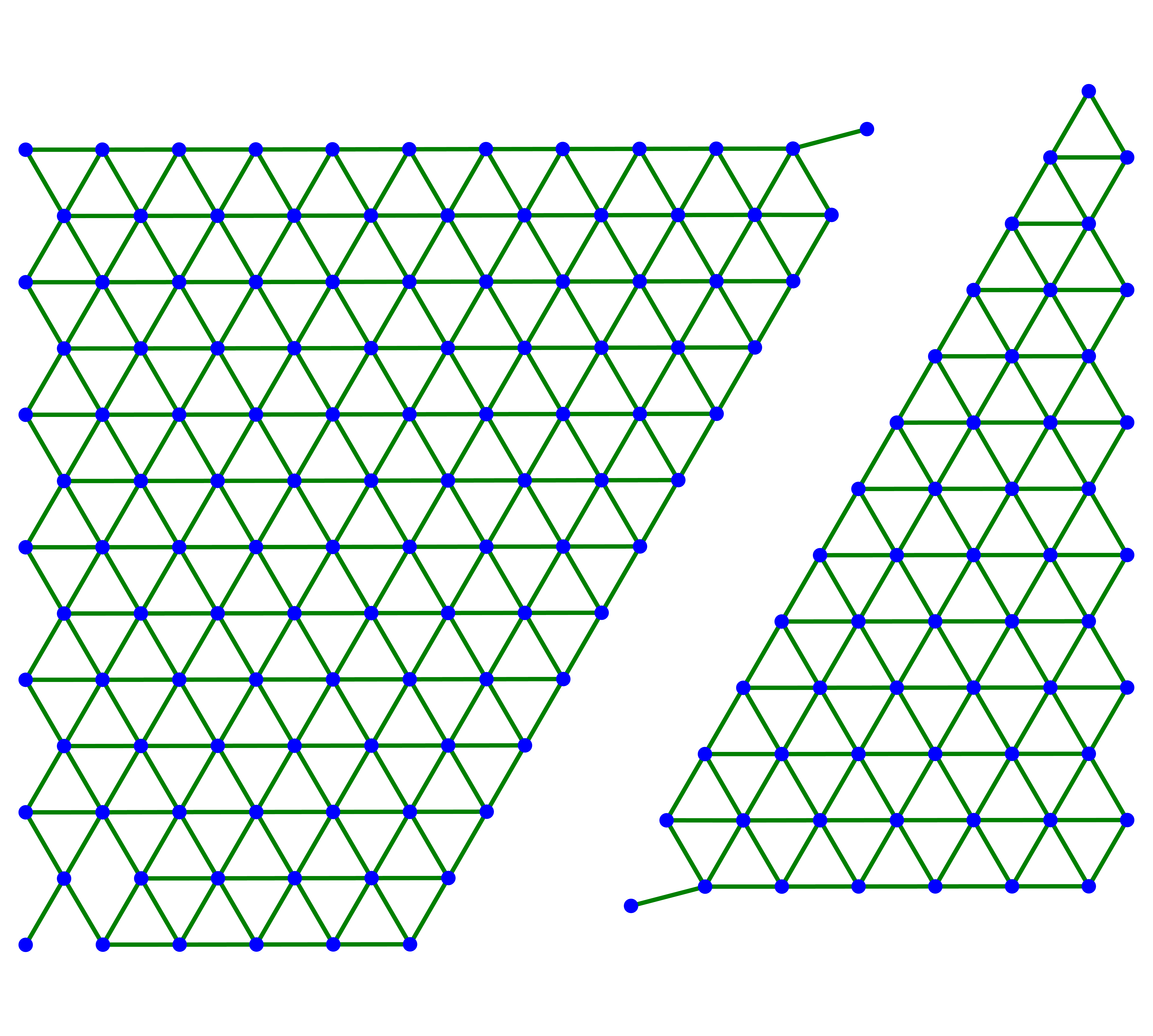}
      \caption{Step 225}
      \label{fig:square_diag_9}
  \end{subfigure} \\
     \caption{Evolution for sinusoidal  boundary conditions with angle $\pi/8$.}
     \label{fig:square_diag} 
     \end{figure}

\begin{figure}[h!]
  \centering
      \begin{subfigure}[b]{0.298\textwidth}
      \centering
      \includegraphics[width=\textwidth]{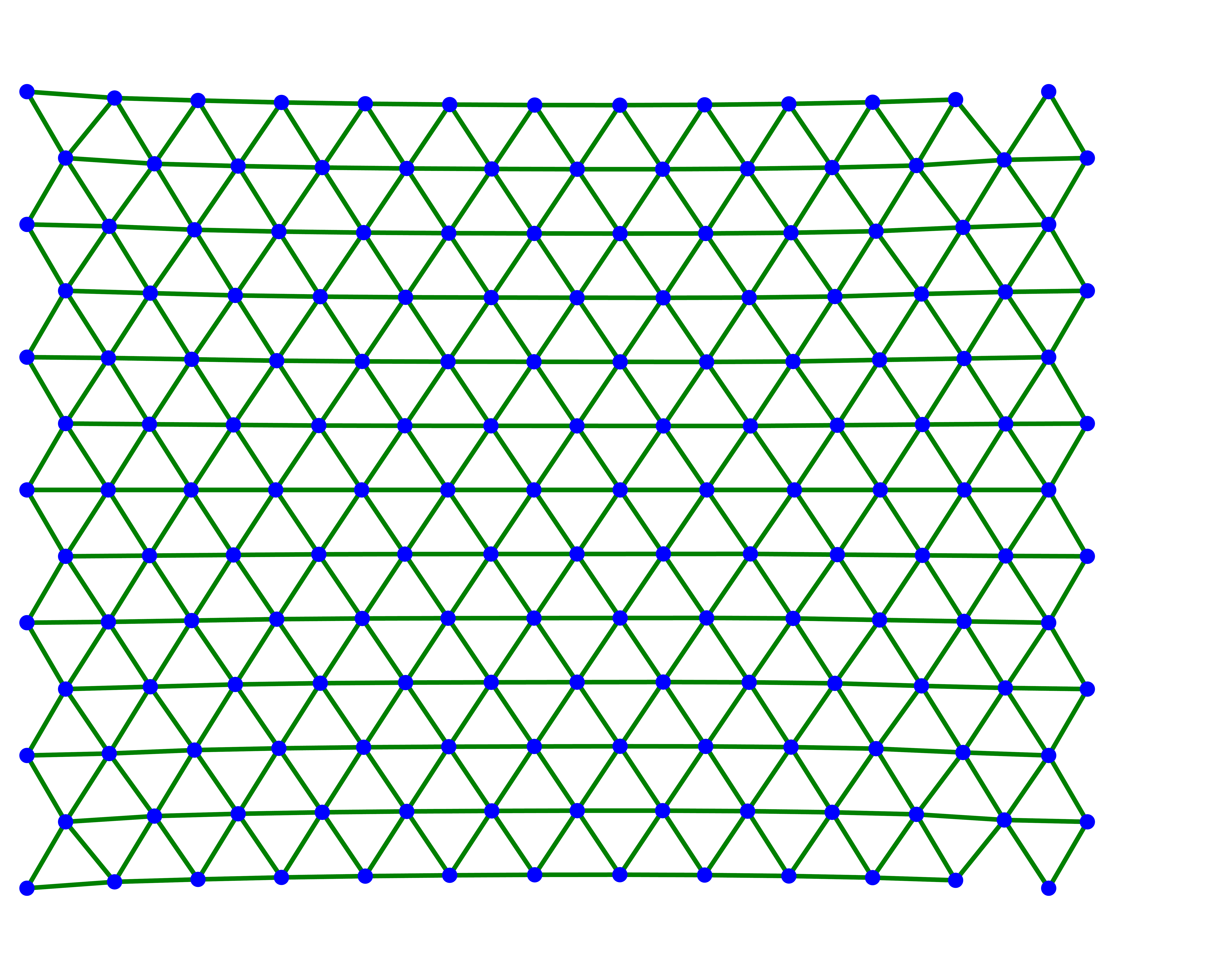}
      \caption{Step 15}
      \label{fig:square_2}
  \end{subfigure}
  \hfill
  \begin{subfigure}[b]{0.298\textwidth}
      \centering
      \includegraphics[width=\textwidth]{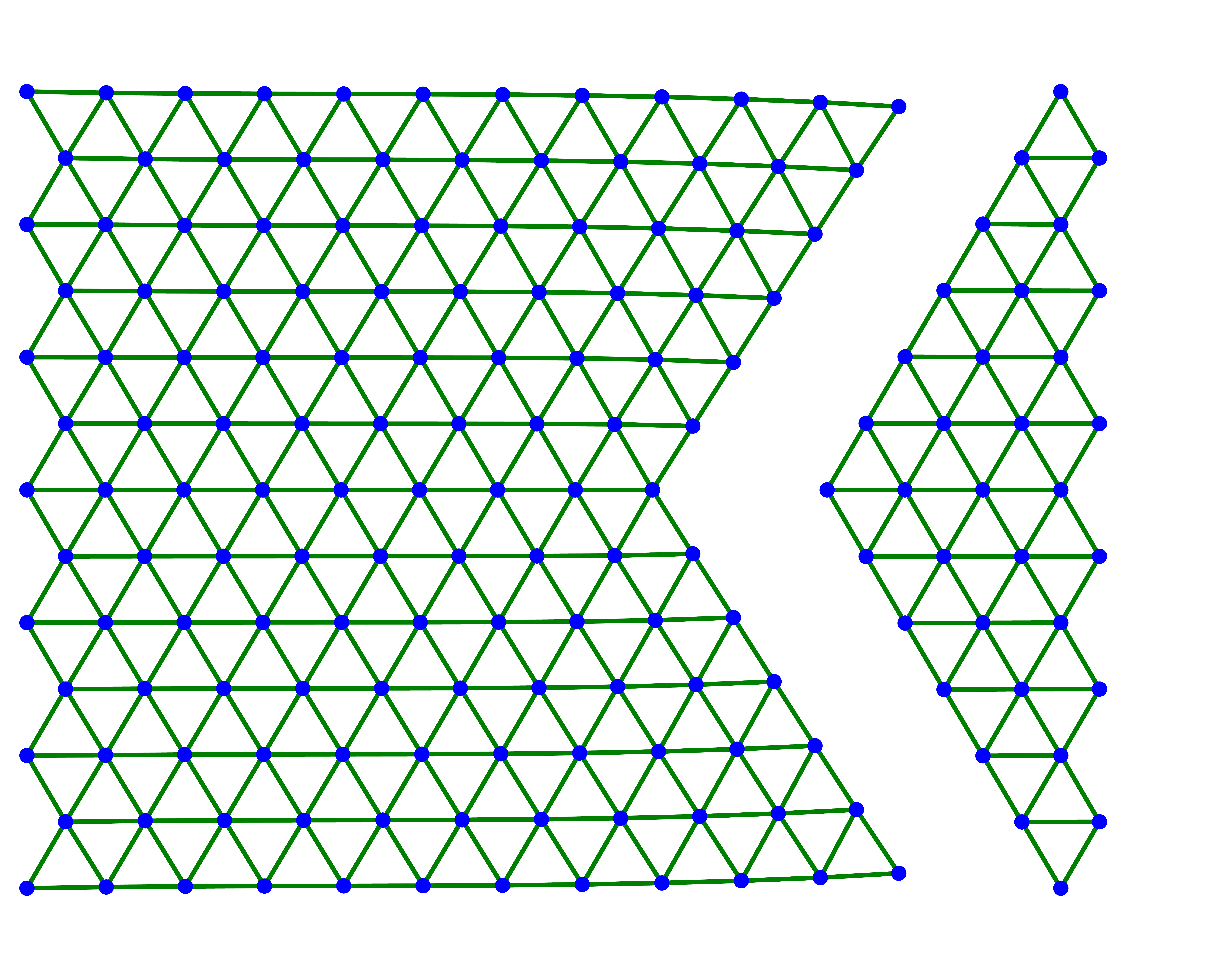}
      \caption{Step 17}
      \label{fig:square_3}
  \end{subfigure}
  \hfill
  \begin{subfigure}[b]{0.298\textwidth}
      \centering
      \includegraphics[width=\textwidth]{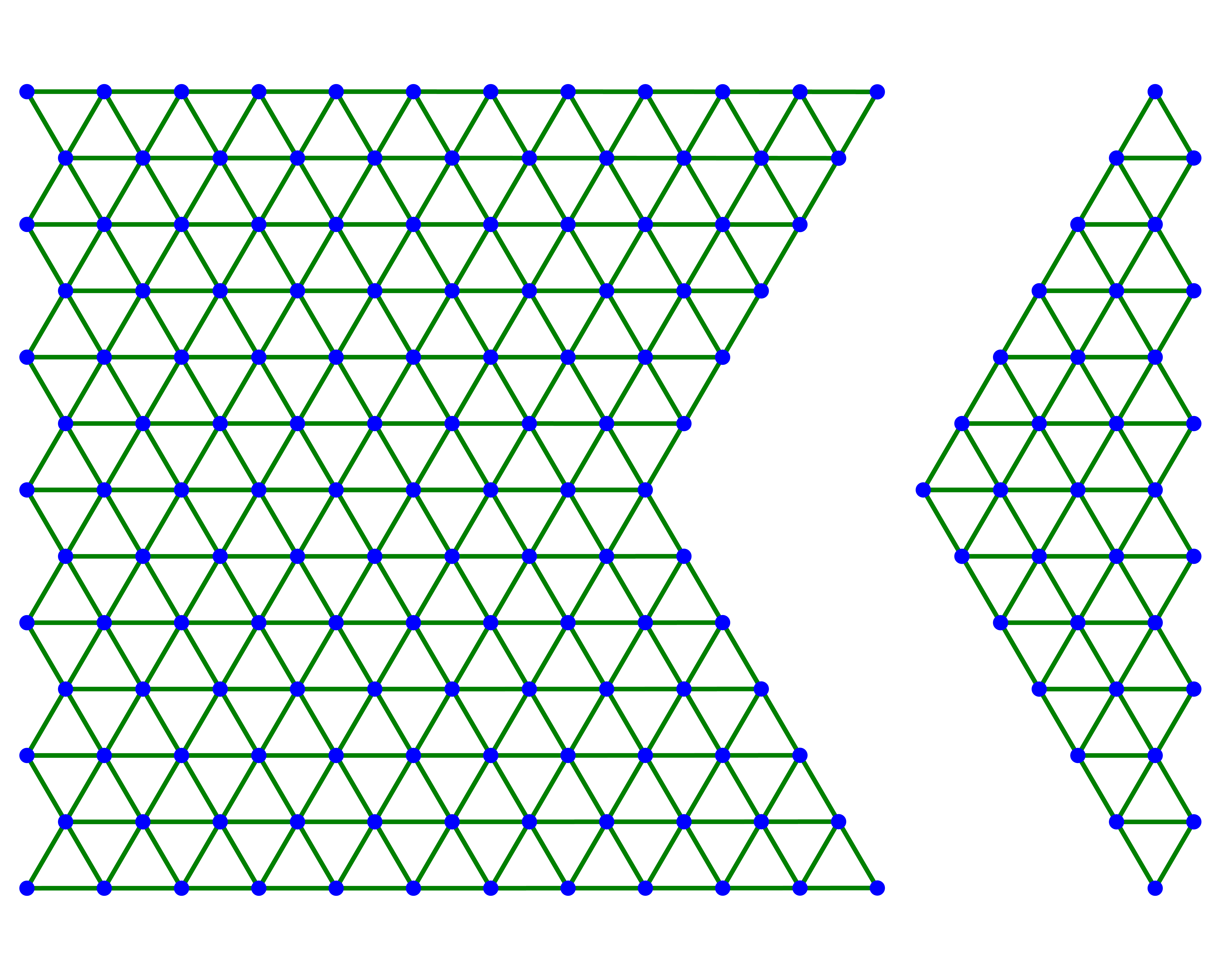}
      \caption{Step 45}
      \label{fig:square_4}
  \end{subfigure}
     \caption{Evolution for horizontal  stretching.}
     \label{fig:square_diag2} 
\end{figure}

We continue by showing several examples in   two dimensions. 
In all cases, we consider the triangular lattice  with  nearest neighbors and next-to-nearest neighbors  as described  in \eqref{eq: NNI}--\eqref{eq: NNIXX} with $\eta = 0.25$.  The evolution starts with a finite amount of atoms  arranged in a lattice   of equilateral triangles. The left-most atoms are fixed along the evolution while the right-most atoms are stretched horizontally or with respect to some angle. 

Figure \ref{fig:square_extr}  shows the case of a large value $\nu = 1$ for horizontal stretching.
Figures \ref{fig:square_l2_1}--\ref{fig:square_l2_3} depict three steps of the scheme with $L^2$-dissipation, where the right-most layer of atoms detaches from the material. Figures \ref{fig:square_kv_1}--\ref{sub@fig:square_kv_3} shows steps arising from a Kelvin-Voigt-type dissipation.
In this case,  the material does not develop a  sharp  crack interface. Instead, starting from the center more and more horizontal  bonds  are broken. This suggests that $\nu$ should be chosen much smaller.

  Figure \ref{fig:square_diag}  shows several steps of the respective minimizing movement scheme  with $L^2$-dissipation and small dissipation coefficient $\nu = 0.01$. In contrast to Figure \ref{fig:square_extr},  we use  sinusoidal  boundary conditions that move the right-most points of the lattice upwards in direction  $(\cos(\pi/8),\sin(\pi/8))$. As predicted  in \cite{FS153, FS152} for the static case, the lattice breaks apart along a crystallographic line,  see Figures  \ref{fig:square_diag_6} and \ref{fig:square_diag_7}. The two components are stopped from colliding into each other in the subsequent compression phase (see Figure  \ref{fig:square_diag_8}).  After stretching once more, the components  separate from each  other   reaching the final state in Figure  \ref{fig:square_diag_9}.  Although not shown here, the evolution arising from a dissipation of Kelvin-Voigt type does essentially not differ  from the one shown in Figure \ref{fig:square_diag}. Furthermore, Figure \ref{fig:square_diag2}  shows an experiment with boundary conditions in horizontal direction. Due to the symmetry of the lattice, the crack line can be kinked, in accordance to  \cite{FS153, FS152}.

Eventually, in Figure \ref{fig:stress-strain} we plot the stress-strain curve that corresponds to the stretching phase in a tension-compression experiment similar to the one shown in Figure \ref{fig:square_diag}.  More precisely, we computed the total stress  $|\sum_{i=1}^N\nabla_{y_i} \mathcal{E}(y(t); (y(s))_{s<t})|$, suitably normalized by the number of boundary interactions.  As the material is stretched, the stress increases almost linearly until the formation of a crack where the stress drops to zero. The flattening of the curve for higher strains is due to the nonlinearity of the interaction potentials. In  case of a smaller crack-threshold $R$, the material breaks already at a smaller loading. Accordingly,  the flattening of the curve is less pronounced   and the maximal value of the stress, which corresponds to  the transition from the  elastic to  the fracture phase, is smaller.

\EEE

\begin{figure}[h]
  \centering
  \begin{subfigure}[b]{0.49\textwidth}
    \centering
    \includegraphics[width=\textwidth]{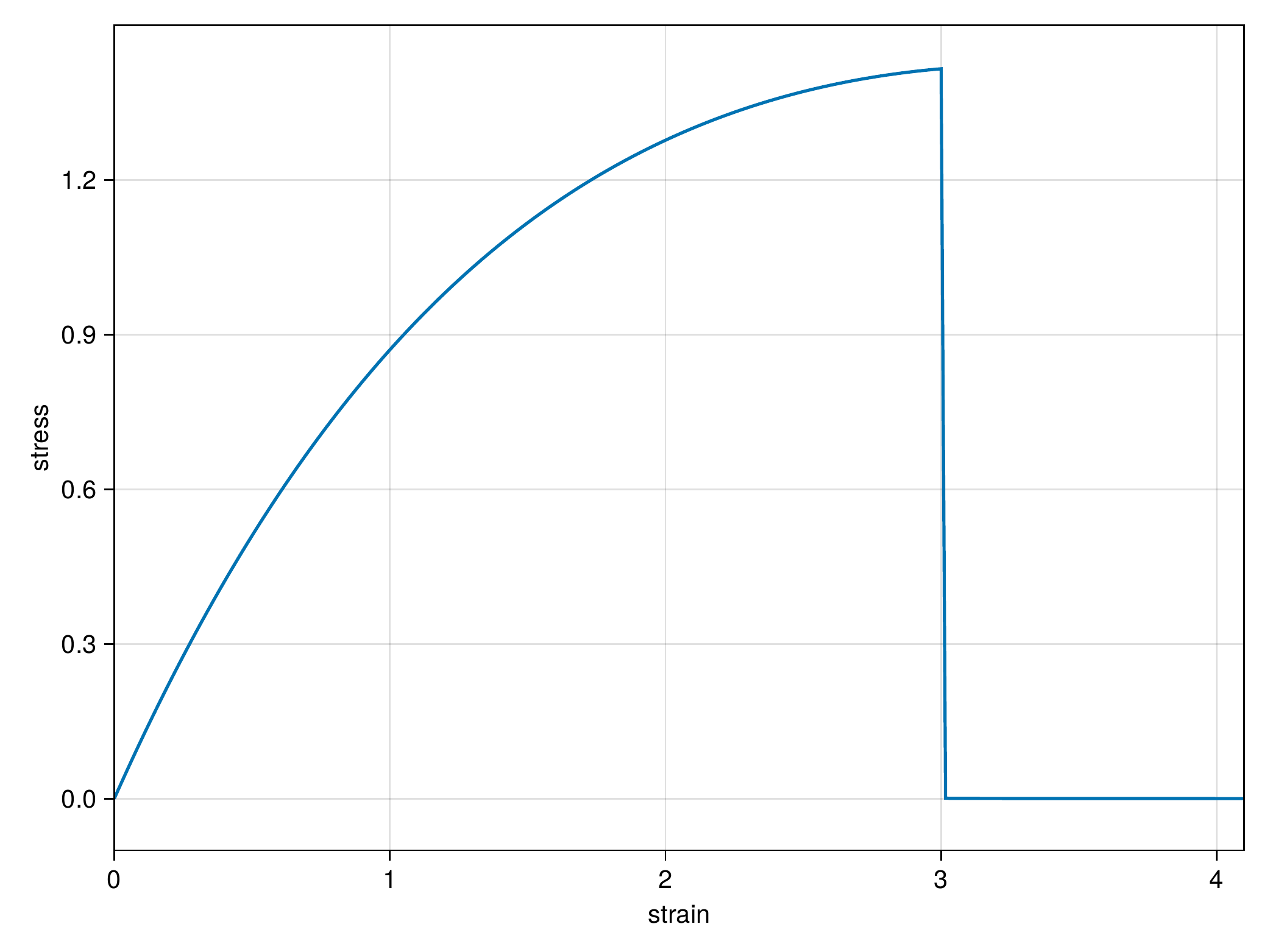}
    \label{fig:stress-strain1}
     \caption{threshold $R=1.2\eps$}
  \end{subfigure}
  \begin{subfigure}[b]{0.49\textwidth}
    \centering
    \includegraphics[width=\textwidth]{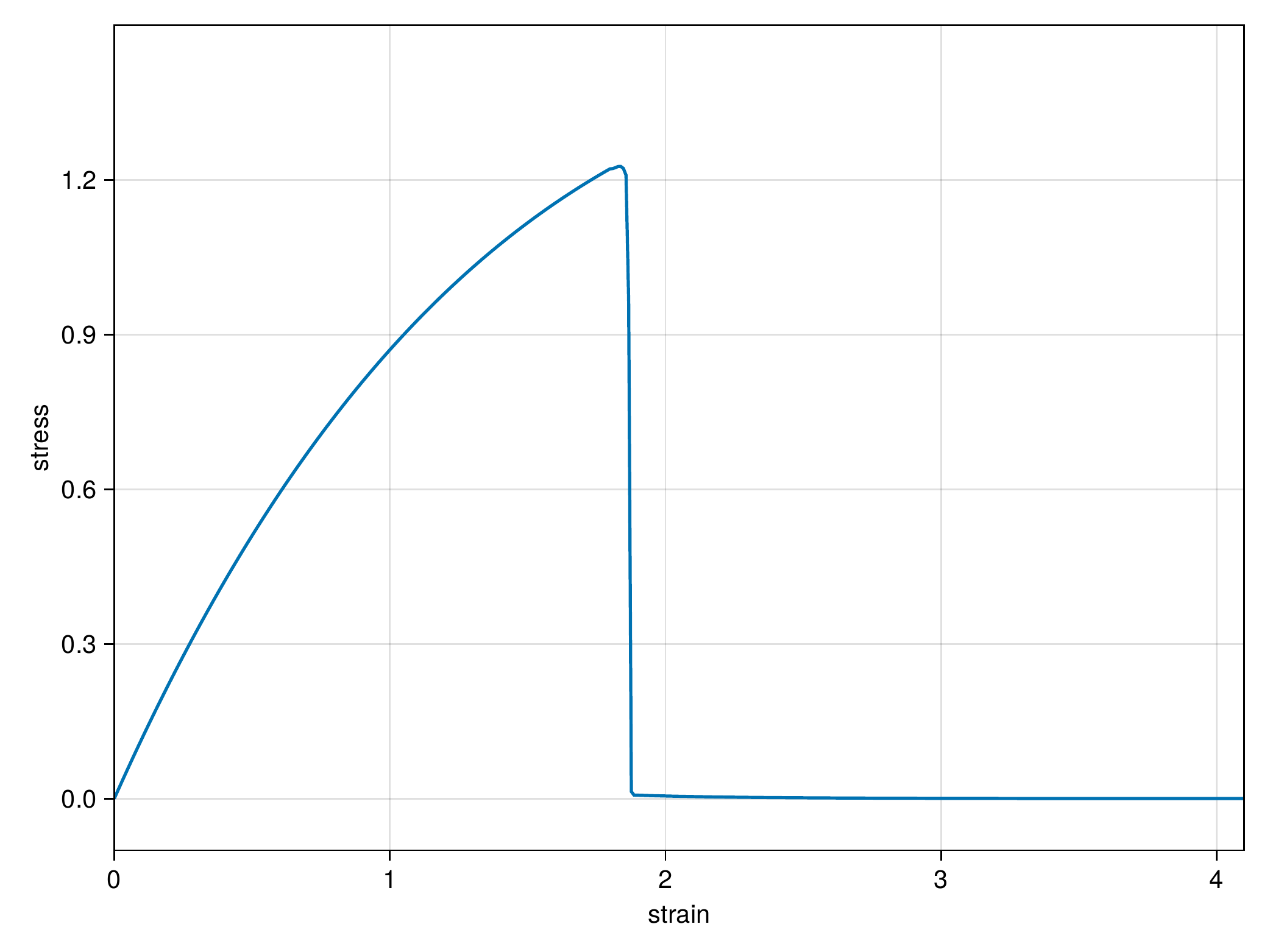}
    \label{fig:stress-strain2}
     \caption{threshold $R=1.07\eps$}
  \end{subfigure}
  \caption{Stress-Strain curves for different thresholds.}
  \label{fig:stress-strain} 
\end{figure}

\section{Proofs}\label{sec: proofs}

\subsection{Incremental minimization scheme}
 
  In this subsection, we analyze the scheme \eqref{eq: MM} for a given family of deformations $(y^\tau_j)_{j< k}=(y^\tau_j)_{j\leq k-1}$. For convenience, we denote the memory variable $M_{x,x'}((y^\tau_j)_{j< k})$ simply by $M^{x,x'}_{k-1}$. In a similar fashion, we express the  energy for fixed $x,x'\in \mathcal{L}(\Omega), x \neq x'$,  by  
 \begin{align}\label{eq: crazy1}
 E^{x,x'}_{k-1}(\cdot):=E_{x,x'}(\;\cdot\;;(y^\tau_j)_{j < k}),
 \end{align}
  and for  the sum over all contributions we write 
  \begin{align}\label{eq: crazy2}
  \mathcal{E}_{k-1}(\cdot):=\mathcal{E}(\;\cdot\;; (y^\tau_j)_{j < k})\,.
  \end{align}  
 Furthermore, in this subsection we drop the superscript $\tau$, i.e., we write $y_k$ and $t_k$ in place of $y_k^{\tau}$ and $t_k^\tau$, respectively. \EEE By convention, in proofs  we often drop the index $x,x'$ for   $W_{x,x'}$, $\varphi_{x,x'}$,  $ E^{x,x'}_{k-1}$, \EEE  $M^{x,x'}_{k-1}$, $R^{1}_{x,x'}$,   $R^{2}_{x,x'}$ \EEE if no confusion arises.

  We begin by proving a lemma that follows from the very definition of the   potential in \eqref{eq: memory2} and the incremental minimization scheme described in \eqref{eq: MM}.   
\begin{lemma}\label{lift_lemma}
    Given a time step $t_k$ and deformations $(y_j)_{j < k}$ at previous time steps, let $y_{k}$ be defined as in \eqref{eq: MM}. Then,
    \[\mathcal{E}_{k-1}(y_{k}) =\mathcal{E}_{k}(y_{k}).\] 
\end{lemma}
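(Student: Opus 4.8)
The plan is to show that adding the current deformation $y_k$ to the history does not change the memory variable relevant for evaluating the energy at $y_k$ itself, hence $\mathcal E_{k-1}(y_k) = \mathcal E_k(y_k)$. Recall that $M^{x,x'}_{k-1} = M_{x,x'}((y_j)_{j<k}) = \sup_{j<k} |y_j(x')-y_j(x)| \vee |x-x'|$, whereas $M^{x,x'}_{k} = M_{x,x'}((y_j)_{j<k+1}) = M^{x,x'}_{k-1} \vee |y_k(x')-y_k(x)|$. So the two memory variables differ only by the possible inclusion of the term $|y_k(x')-y_k(x)|$.

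The key observation is that the energy contribution $E_{x,x'}(y; (y_s)_{s<t})$ defined in \eqref{eq: memory2}, evaluated at $y = y_k$, is insensitive to whether the memory variable is $M^{x,x'}_{k-1}$ or $M^{x,x'}_{k-1} \vee |y_k(x')-y_k(x)|$. Indeed, I would distinguish two cases for each pair $(x,x')$. If $|y_k(x')-y_k(x)| \le M^{x,x'}_{k-1}$, then $M^{x,x'}_{k} = M^{x,x'}_{k-1}$ and there is nothing to prove. If $|y_k(x')-y_k(x)| > M^{x,x'}_{k-1}$, then $M^{x,x'}_{k} = |y_k(x')-y_k(x)|$; I then plug this into \eqref{eq: memory2}, using that $\varphi_{x,x'}$ is increasing and that $W_{x,x'}$ is increasing on $[|x-x'|,\infty)$ (Assumption \ref{W_decr_incr}), so that $W_{x,x'}(|y_k(x')-y_k(x)|) \vee W_{x,x'}(M^{x,x'}_{k}) = W_{x,x'}(|y_k(x')-y_k(x)|)$ when the memory equals the current length, and compare term by term with the expression using $M^{x,x'}_{k-1}$. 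In both subcases one checks directly that $E^{x,x'}_{k-1}(y_k) = E^{x,x'}_{k}(y_k)$, using only monotonicity of $W_{x,x'}$ and $\varphi_{x,x'}$ and the elementary identity $a \vee (a \vee b) = a \vee b$. Summing over all pairs $x \neq x'$ and dividing by $2$ gives the claim via \eqref{eq: main energy}.

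Note that the minimizing-movement structure of \eqref{eq: MM} is actually not needed for this lemma; the identity holds for \emph{any} admissible $y_k$ and follows purely from the algebraic form of the energy with the maximal-opening memory variable — the statement is phrased with $y_k$ from \eqref{eq: MM} only because that is the object of interest in the sequel. The main (and only mild) obstacle is bookkeeping the case distinction cleanly in \eqref{eq: memory2}: one must make sure that, in the case $|y_k(x')-y_k(x)| > M^{x,x'}_{k-1}$, the first (elastic) term $(1-\varphi_{x,x'}(M))W_{x,x'}(|y_k(x')-y_k(x)|)$ transforms correctly when $M$ jumps from $M^{x,x'}_{k-1}$ to $|y_k(x')-y_k(x)|$ — this requires comparing $\varphi_{x,x'}(M^{x,x'}_{k-1})$ with $\varphi_{x,x'}(|y_k(x')-y_k(x)|)$ and recombining with the second term, where the $\vee$ inside $W_{x,x'}(\cdot)\vee W_{x,x'}(M)$ absorbs the discrepancy precisely because $W_{x,x'}$ is monotone past $|x-x'|$. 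I expect this to reduce to a short computation with at most two or three lines per case.
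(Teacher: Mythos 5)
Your proposal is correct and mirrors the paper's argument exactly: split by whether $|y_k(x')-y_k(x)| \le M_{k-1}^{x,x'}$ or $|y_k(x')-y_k(x)| > M_{k-1}^{x,x'}$, and in the nontrivial case observe that both $E_{k-1}^{x,x'}(y_k)$ and $E_k^{x,x'}(y_k)$ collapse to $W_{x,x'}(|y_k(x')-y_k(x)|)$, since the maximum in the second term of \eqref{eq: memory2} is attained at the current bond length by monotonicity of $W_{x,x'}$ on $[|x-x'|,\infty)$. One small streamlining: monotonicity of $\varphi_{x,x'}$ is not actually needed, because once the maximum collapses the coefficients recombine via $(1-\varphi)+\varphi=1$ regardless of the value of $\varphi$; and your side remark that the minimizing-movement structure of \eqref{eq: MM} is irrelevant to this identity is also correct.
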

\begin{proof}
    It suffices to show that $E^{x,x'}_{k-1}(y_{k})=E^{x,x'}_k(y_{k})$  for all $x,x'\in \mathcal{L}(\Omega)$, $x \neq x'$. For simplicity, we drop  the superscript $x,x'$  in the proof.  Recall that by definition \eqref{eq: memory2} we have 
    \begin{align}\label{eq: Ek-1}
    E_{k-1}(y_{k}):=(1- \varphi(M_{k-1})) W ( |y_{k}(x') - y_{k}(x)| )  + \varphi(  M_{k-1}) \big(W(  M_{k-1}) \vee  W(|y_{k}(x') - y_{k}(x)| )\big)\,,
    \end{align}
    where $\varphi(M_{k-1})=0$ for $M_{k-1}\leq R^{1}$ and $\varphi(M_{k-1})=1$ for $M_{k-1}\geq R^{2}$. Note that by the  definition of the memory variable  in \EEE \eqref{eq: memory} we obtain 
    \[M_{k}= M_{k-1}\vee |y_{k}(x')-y_{k}(x)|\,.\]
    \emph{Case 1:} We first consider pairs $x,x'\in \mathcal{L}(\Omega)$, $x \neq x'$, that fulfill $M_{k-1}=M_{k}$. Then  $E_{k-1}(y_{k}) = E_{k}(y_{k})$  trivially holds. \\
    \emph{Case 2:} Let $x,x'\in \mathcal{L}(\Omega)$, $x \neq x'$, be such that 
    $M_{k-1}<M_{k}$. Then, we have 
    \[M_{k}=|y_k(x)-y_k(x')|= M_{k-1} \vee |y_k(x)-y_k(x')|\,,\] and since $W$ is increasing on  $[|x-x'|, \infty)$ by Assumption  \ref{W_decr_incr} \EEE and  $M_{k-1} \geq |x-x'|$ we get 
    \[W(M_{k-1})\vee W(|y_{k}(x') - y_{k}(x)|) = W(|y_{k}(x') - y_{k}(x)| )= W(M_{k})\vee W(|y_{k}(x') - y_{k}(x)|) \,.\] 
    Hence, by \eqref{eq: Ek-1} we obtain
    \[E_{k-1}(y_k)=W(|y_{k}(x') - y_{k}(x)| )=E_{k}(y_k)\,, \] which proves the assertion. 
\end{proof}


As a next step, we prove an estimate that will enable us to control the time derivative of the deformation.   As a preparation, consider  $g_i = g_i^\tau = g(t_i^\tau)$ for every $i \in \{0, \dots, T/\tau\}$.
Then, using $g \in H^1([0, T]; \R^{Nn})$, the fundamental theorem of calculus, and Jensen's inequality it follows that \EEE
\begin{equation}\label{g:control_1}
  \sum_{i=1}^{T/\tau}\frac{|g_{i}-g_{i-1}|^2}{ \tau}
  = \sum_{i=1}^{T/\tau} \tau \left|
    \frac{1}{\tau} \int_{(i-1)\tau}^{i \tau} \partial_t g(s) \, {\rm d}s
  \right|^2 \EEE
  \leq \int_{0}^{T} |\partial_t  g \EEE (s)|^2\, {\rm d}s
  =\|\partial_{t}  g \EEE\|^{2}_{L^2( [0,T]; \EEE \R^{Nn})}\,,\end{equation}
  and with Hölder's inequality
\begin{equation}\label{g:control_2} 
  \sum_{i=1}^{T/\tau} |g_{i}-g_{i-1}|
   \le \sqrt{\tau} \left(
    \sum_{i = 1}^{T/\tau} \frac{|g_i - g_{i-1}|^2}{\tau}
  \right)^{\frac{1}{2}} \sqrt{T / \tau}
  \le \sqrt{T} \|\partial_t g \|_{L^2(  [0,T]; \EEE  \R^{Nn})}.
\end{equation}
We thus have the  estimates
\begin{align}\label{eq: apriori}
\sum_{i=1}^{T/\tau} \frac{|g_{i}-g_{i-1}|^2}{\tau} \le C, \qquad \sum_{i=1}^{T/\tau}|g_{i}-g_{i-1}| \le C  \,,
\end{align}  
 for a constant $C$ independent of $\tau$.  \EEE
\begin{lemma}\label{lemma: main lemma}
There exists $C >0$ such that for  each $j \in \lbrace 0, \ldots,  T /\tau\rbrace $ it holds that 
\begin{align}\label{eq: newlemma1}  
   \mathcal{E}_{j}(y_{j}) + \sum_{i=1}^j \frac{\nu|y_{i}-y_{i-1}|^2}{2\tau} \leq  C \sum_{i=1}^j|g_{i}-g_{i-1}| + \sum_{i=1}^j \frac{\nu|g_{i}-g_{i-1}|^2}{2\tau} + \mathcal{E}_{0}(y_{0}).
\end{align}
\end{lemma}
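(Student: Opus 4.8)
The plan is to establish the energy inequality \eqref{eq: newlemma1} by induction on $j$, with the key being a careful comparison argument at each time step using the minimality of $y_k$ in \eqref{eq: MM}. The base case $j=0$ is trivial since both sums are empty. For the inductive step, suppose the estimate holds for $j = k-1$; I want to obtain it for $j=k$. The natural competitor in the minimization problem \eqref{eq: MM} at step $k$ is $\tilde{y} := y_{k-1} + (g_k - g_{k-1})$, which lies in $\mathcal{A}(g^\tau_k)$ because it satisfies the correct boundary values (recall $g_k - g_{k-1}$ vanishes on $\mathcal{L}(\Omega) \setminus \mathcal{L}_D(\Omega)$ in our vector convention, so actually $\tilde y$ coincides with $y_{k-1}$ on the free atoms and is shifted only on $\mathcal{L}_D(\Omega)$) and has finite energy. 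Testing \eqref{eq: MM} with this competitor gives
\begin{align*}
\mathcal{E}_{k-1}(y_k) + \frac{\nu}{2\tau}|y_k - y_{k-1}|^2 \le \mathcal{E}_{k-1}(\tilde y) + \frac{\nu}{2\tau}|\tilde y - y_{k-1}|^2 = \mathcal{E}_{k-1}(\tilde y) + \frac{\nu}{2\tau}|g_k - g_{k-1}|^2.
\end{align*}

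The next step is to control $\mathcal{E}_{k-1}(\tilde y)$ in terms of $\mathcal{E}_{k-1}(y_{k-1})$. Here I would exploit that $\tilde y$ differs from $y_{k-1}$ only through a translation of the $\mathcal{L}_D(\Omega)$-atoms by the single vector-increment $g_k - g_{k-1}$ (in the relevant components), so the distances $|\tilde y(x) - \tilde y(x')|$ differ from $|y_{k-1}(x) - y_{k-1}(x')|$ by at most $|g_k - g_{k-1}|$ for every pair, while all pairs with both atoms free are unchanged. Using that each potential $W_{x,x'}$ has bounded first derivative on the relevant range (which follows from \ref{W_zero}, \ref{W_decr_incr}, \ref{W_inf} together with the fact that the memory variable keeps arguments bounded away from $0$, so $W_{x,x'}$ restricted to $[\min_s M_{x,x'}, \infty) \supset$ relevant arguments is Lipschitz), and that the memory-modification \eqref{eq: memory2} is built from $W_{x,x'}$ by operations ($\vee$, convex combination with fixed weights) that preserve Lipschitz bounds in the current-deformation variable, one gets
\begin{align*}
\mathcal{E}_{k-1}(\tilde y) \le \mathcal{E}_{k-1}(y_{k-1}) + C|g_k - g_{k-1}|
\end{align*}
for a constant $C$ depending only on the potentials and the number of atoms. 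Combining with the previous display and then applying Lemma \ref{lift_lemma} (which gives $\mathcal{E}_{k-1}(y_{k-1}) = \mathcal{E}_{k-2}(y_{k-1})$, i.e. the energy with the updated memory agrees, so the induction hypothesis on $\mathcal{E}_{k-1}(y_{k-1})$ is legitimate) yields
\begin{align*}
\mathcal{E}_{k}(y_k) + \frac{\nu}{2\tau}|y_k - y_{k-1}|^2 \le \mathcal{E}_{k-1}(y_{k-1}) + C|g_k - g_{k-1}| + \frac{\nu}{2\tau}|g_k - g_{k-1}|^2,
\end{align*}
using Lemma \ref{lift_lemma} once more on the left to replace $\mathcal{E}_{k-1}(y_k)$ by $\mathcal{E}_k(y_k)$. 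Summing this telescoping inequality over the first $k$ steps and invoking $\mathcal{E}_{k-1}(y_{k-1}) = \mathcal{E}_{k-2}(y_{k-1})$ at each stage to make the bulk terms cancel, together with $\mathcal{E}_0(y_0) = \mathcal{E}(y_0)$ for the initial term, produces exactly \eqref{eq: newlemma1}; note the a priori bounds \eqref{eq: apriori} are not needed for this lemma itself but guarantee the right-hand side is bounded uniformly in $\tau$, which is how the estimate will be used afterwards.

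The main obstacle I anticipate is making rigorous the Lipschitz estimate $\mathcal{E}_{k-1}(\tilde y) \le \mathcal{E}_{k-1}(y_{k-1}) + C|g_k - g_{k-1}|$ with a constant independent of $k$ and $\tau$. The subtlety is twofold: first, one must ensure that all interatomic distances appearing stay in a region where $W_{x,x'}$ is Lipschitz — this is where property \ref{W_zero} (blow-up at $0$) could in principle be problematic, but it is circumvented because $W_{x,x'}$ is finite and smooth on $(0,\infty)$, the memory variable is always $\ge |x-x'| > 0$, and on $y_{k-1}$ one already has finite energy; a uniform lower bound on the relevant distances follows from the energy bound on the $y_j$'s combined with \ref{W_zero}. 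Second, one must verify that the memory-modified energy contribution $E_{x,x'}(\cdot;(y_j)_{j<k})$ in \eqref{eq: memory2}, viewed as a function of the current deformation $y$ only (the memory $M_{k-1}$ being frozen), inherits a Lipschitz constant controlled by that of $W_{x,x'}$ — this is immediate from the structure (affine combination with coefficients in $[0,1]$ of $W_{x,x'}(|\cdot|)$ and $W_{x,x'}(|\cdot|) \vee \text{const}$, both $1$-Lipschitz-composed-with-$W$). I would also take care that the competitor $\tilde y$ indeed has finite energy so that it is admissible; this holds since its distances are bounded away from $0$ (being close to those of $y_{k-1}$, which has finite energy) and bounded above. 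With these points dispatched, the remainder is the routine telescoping-and-summation bookkeeping.
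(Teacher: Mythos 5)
Your argument follows the same skeleton as the paper's: induction on $j$; testing the minimization \eqref{eq: MM} with the competitor $y_{k-1}+g_k-g_{k-1}$; a Lipschitz estimate for the memory-modified energy contribution (which indeed is inherited from the Lipschitz bound on $W_{x,x'}$ on a region bounded away from $0$, via the $\vee$- and convex-combination structure of \eqref{eq: memory2}); and Lemma \ref{lift_lemma} to pass from $\mathcal{E}_{k-1}(y_k)$ to $\mathcal{E}_k(y_k)$ so that the bulk terms telescope against the induction hypothesis. You also correctly flag the two delicate points: the lower bound on interatomic distances, and admissibility of the competitor.

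There is, however, one misstatement that papers over a real step. You write that the a priori bounds \eqref{eq: apriori} ``are not needed for this lemma itself.'' In fact they are used \emph{inside} the inductive step. The induction hypothesis by itself gives
\begin{align*}
\mathcal{E}_{k-1}(y_{k-1}) \le C\sum_{i=1}^{k-1}|g_i - g_{i-1}| + \sum_{i=1}^{k-1}\frac{\nu|g_i - g_{i-1}|^2}{2\tau} + \mathcal{E}_0(y_0),
\end{align*}
and it is precisely \eqref{eq: apriori} that turns the right-hand side into a bound $\hat{C}$ independent of $k$ (and of $\tau$). This uniform energy bound is what yields a $k$-independent lower bound $\bar{c}_{x,x'}$ on the interatomic distances via \ref{W_zero}, and hence a $k$-independent Lipschitz constant $\bar L$. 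That uniformity is essential: if the Lipschitz constant were allowed to degrade with $k$, the one-step inequality would produce a $k$-dependent constant on the right, and the induction would not close with a single $C$. Your phrase ``a constant $C$ depending only on the potentials and the number of atoms'' is therefore too optimistic; the constant also depends on $g$ and $y_0$ through the energy bound, and \eqref{eq: apriori} is the device that makes this dependence uniform. With this point repaired, your proposal matches the paper's proof.
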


\begin{proof}
We prove the statement by induction. The case $j= 0$ is trivial. Suppose that the estimate holds for $0 \le j  \le k-1$. In particular,  by \eqref{eq: apriori} we get \EEE
\begin{align}\label{eq: energy bound}
   \mathcal{E}_{j}(y_{j}) \le \hat{C} \quad \text{ for $0 \le j \le k-1$}
\end{align}
for some $\hat{C}>0$ depending on $g$  and $y_0$ \EEE but independent of $j$. By choosing $y_{k-1}+g_{k}-g_{k-1}$   as a test function in the minimization problem  \eqref{eq: MM} we get
\begin{equation}\label{testing}
     \mathcal{E}_{k-1} \EEE (y_{k})+\frac{\nu|y_{k}-y_{k-1}|^2}{2\tau}\leq  \mathcal{E}_{k-1} \EEE (y_{k-1}+g_{k}-g_{k-1})+ \frac{\nu|g_{k}-g_{k-1}|^2}{2\tau}\,. \\
\end{equation}
We aim at estimating $\mathcal{E}_{k-1}(y_{k-1}+g_{k}-g_{k-1})$. As before, this can be reduced to considering each $x,x' \in \mathcal{L}(\Omega)$, $x \neq x'$. First, by Assumption  \ref{W_zero} \EEE and \eqref{eq: energy bound} we get 
\begin{align}\label{eq: lower bound}
|y_{k-1}(x) - y_{k-1}(x')| \ge \bar{c}_{x,x'}
\end{align}
 for a constant $\bar{c}_{x,x'} >0$ depending on $W_{x,x'}$,  $g$, and $y_0$, \EEE but independent of $k$. By the regularity of $W_{x,x'}$ and Assumption  \ref{W_inf} \EEE we find that $W_{x,x'}$ is Lipschitz continuous on $[\bar{c}_{x,x'}/2,\infty)$. Therefore, also $y \mapsto W_{x,x'}(|y|)$ is Lipschitz continuous on $\lbrace y\in  \R^n \EEE \colon |y| \ge \bar{c}_{x,x'}/2 \rbrace$. We denote the Lipschitz constant by $L_{x,x'}$ which is independent of $k$.

We again drop subscripts and write    $E_{k-1}(y_{k-1}+g_{k}-g_{k-1})$, $W$, $L$, and $\bar{c}$ for convenience.   Note that for $z_1,z_2 \in  \R^n \EEE$ with $|z_1| \ge \bar{c}$ and $|z_2| \le \bar{c}/2$ we have $|W(z_1 + z_2) - W(z_1)| \le L |z_2|$.  Let  $z_1 := y_{k-1}(x)-y_{k-1}(x')$ and  $z_2 := (g_{k}-g_{k-1})(x)-(g_{k}-g_{k-1})(x')$. Observe that  $|z_1| \ge \bar{c}$ by \eqref{eq: lower bound},  and that $|z_2| \le \bar{c}/2$  for $\tau$ small enough since  \eqref{eq: apriori} implies  \EEE 
$$|(g_{k}-g_{k-1})(x)-(g_{k}-g_{k-1})(x')| \le |(g_{k}-g_{k-1})(x)| + |(g_{k}-g_{k-1})(x')|  \le \sqrt{2}|g_{k}-g_{k-1}| \le C\sqrt{\tau}.$$
 Then, we get   
\begin{align} \label{Lipschitz1}
\big|W(|y_{k-1}(x)-y_{k-1}(x')  +  &  (g_{k}-g_{k-1})(x)-(g_{k}-g_{k-1})(x')|) - W(|y_{k-1}(x)-y_{k-1}(x')|) \big| \notag \\ &  \le L| (g_{k}-g_{k-1})(x)-(g_{k}-g_{k-1})(x')| \notag \\
& \le  L \bigl(|g_{k}(x)-g_{k-1}(x)|+|g_{k}(x')-g_{k-1}(x')|\bigr)\,.
\end{align}
As $(a+b)\vee c \leq (a \vee c) + b$ for $a,b,c \ge 0$, we can also infer that 
\begin{equation}\label{Lipschitz_MM}
    \begin{aligned}
        &W(|(y_{k-1}+g_{k}-g_{k-1})(x)-(y_{k-1}+g_{k}-g_{k-1})(x')| ) \vee W(M_{k-1})\leq \\ &  \bigl(W(|y_{k-1}(x)-y_{k-1}(x')|)\vee W(M_{k-1})\bigr) \EEE + L \big(|g_{k}(x)-g_{k-1}(x)|+|g_{k}(x')-g_{k-1}(x')|\big)\,.
\end{aligned}
\end{equation} 
Putting \eqref{Lipschitz1} and \eqref{Lipschitz_MM} together we derive for the energy contribution ${E}_{k-1}(y_{k-1}+g_{k}-g_{k-1}) $ (see definition \eqref{eq: Ek-1}) the estimate
\begin{equation}\label{est: potential}
    {E}_{k-1}(y_{k-1}+g_{k}-g_{k-1})\leq {E}_{k-1}(y_{k-1})+ L \big(|g_{k}(x)-g_{k-1}(x)|+|g_{k}(x')-g_{k-1}(x')|\big)\,.
\end{equation}
From this estimate, summing over all pairs $x,x'$, we conclude 
\begin{align*}
    &\mathcal{E}_{k-1}(y_{k-1}+g_{k}-g_{k-1}) = \frac{1}{2}\sum_{\underset{x \neq x'}{x, x' \in \mathcal{L}(\Omega)}} {E}^{x,x'}_{k-1}(y_{k-1}+g_{k}-g_{k-1})\\ \leq \;&\frac{1}{2}\sum_{\underset{x \neq x'}{x, x' \in \mathcal{L}(\Omega)}} {E}^{x,x'}_{k-1}(y_{k-1})+ \frac{1}{2}\sum_{\underset{x \neq x'}{x, x' \in \mathcal{L}(\Omega)}} L_{x,x'} \, \big(|g_{k}(x)-g_{k-1}(x)|+|g_{k}(x')-g_{k-1}(x')|\big) \\  \leq \;&\mathcal{E}_{k-1}(y_{k-1}) +  \bar{L}(N-1) \sum_{x \in \mathcal{L}(\Omega)}  |g_{k}(x)-g_{k-1}(x)|,
\end{align*}
where $\bar{L}$ is the maximum of the involved Lipschitz constants. Eventually, we get  by Hölder's inequality \EEE
\begin{equation}\label{est: energy}
 \mathcal{E}_{k-1}(y_{k-1}+g_{k}-g_{k-1}) \le \mathcal{E}_{k-1}(y_{k-1}) +    \bar{L}(N-1)   \sqrt{N} \EEE |g_{k}-g_{k-1}|. 
\end{equation}
Note that $\bar{L}$ can be chosen independently \EEE of $k$  as the constants $\bar{c}_{x,x'}$ in \eqref{eq: lower bound} are independent of $k$. \EEE  Putting \eqref{testing} and \eqref{est: energy} together and rearranging terms yields   
\begin{equation*}
    \mathcal{E}_{k-1}(y_{k})-\mathcal{E}_{k-1}(y_{k-1})+\frac{\nu|y_{k}-y_{k-1}|^2}{2\tau}\leq   C |g_{k}-g_{k-1}| + \frac{\nu|g_{k}-g_{k-1}|^2}{2\tau}
\end{equation*} 
for some $C>0$ large enough. 
By Lemma \ref{lift_lemma} this becomes
\begin{equation*}
    \mathcal{E}_k(y_{k})-\mathcal{E}_{k-1}(y_{k-1})+\frac{\nu|y_{k}-y_{k-1}|^2}{2\tau}\leq   C |g_{k}-g_{k-1}| + \frac{\nu|g_{k}-g_{k-1}|^2}{2\tau}.
\end{equation*} 
This inequality added to \eqref{eq: newlemma1}   for $j=k-1$ gives the desired estimate \eqref{eq: newlemma1}  for $j= k$.  
\end{proof}

%

%
%
%

\subsection{Compactness}

In this subsection, we prove Theorem \ref{th: compactness}. For this reason, we again include the time step $\tau$ in our notation.  After the preparation in Lemma \ref{lemma: main lemma} the proof is standard. We  perform it here, \EEE however, for convenience of the reader.

\begin{proof}[Proof of Theorem \ref{th: compactness}]
 The \EEE right-hand side of \eqref{eq: newlemma1} for $j= T /\tau$  can \EEE be expressed  in terms of  $g $ as in \EEE \eqref{g:control_1}--\eqref{g:control_2}.  Thus,  from \eqref{eq: newlemma1}  \EEE we obtain
$$  \sum_{i=1}^{T/\tau} \frac{\nu|y^\tau_{i}-y^\tau_{i-1}|^2}{2\tau} \leq  C\sqrt{T}  \|\partial_{t} g\|_{L^2([0,T]; \R^{Nn})} + \frac{1}{2}\|\partial_{t} g\|_{L^2([0,T]; \R^{Nn})}^{2} + \mathcal{E}_{0}(y^\tau_{0}) -  \mathcal{E}_{T/\tau}(y^\tau_{T/\tau}) \le \hat{C} $$
for some $\hat{C}>0$, where in the last step we used the regularity of $g$ and the fact that $\mathcal{E}_{T/\tau}$ is bounded from below. 
    Considering the interpolation $\hat{y}_{\tau}$ as introduced before Theorem \ref{th: compactness}, we derive
        \[ \frac{\nu}{2}\int_{0}^{T}|\partial_{t}\hat{y}_{\tau}|^2\, {\rm d}s =   \sum_{i=1}^{T/\tau}\frac{\nu|y^\tau_{i}-y^\tau_{i-1}|^2}{2\tau} \le \hat{C}\,.\]
Therefore,  $\partial_{t}\hat{y}_{\tau}$ is uniformly bounded in $L^2([0,T];\mathbb{R}^{Nn})$. By weak compactness we have, up to a subsequence, 
    \[\hat{y}_{\tau}\rightharpoonup y \quad \text{in} \; H^1([0,T];\R^{Nn}) \quad \text{for some} \;y\in H^1([0,T];\R^{Nn})\,.\]      
  In particular, $y$ is H\"older-continuous    by the well-known inclusion $H^1(0,T)\subset C^{0,1/2}(0,T)$ and the convergence is uniform. In fact, by the fundamental theorem of calculus  we get
    \[|\hat{y}_{\tau}(t')-\hat{y}_{\tau}(t)|= \Big|\int_{t}^{t'} \partial_{t}y_{\tau}(s)\, {\rm d}s\Big|\leq \int_{t}^{t'} |\partial_{t}y_{\tau}(s)| \, {\rm d}s \,,\]
    and using  Hölder's inequality we  deduce 
    \[|\hat{y}_{\tau}(t')-\hat{y}_{\tau}(t)|\leq \Bigl(\int_{t}^{t'} |\partial_{t}y_{\tau}(s)|^2 \, {\rm d}s\Bigr)^{1/2}  \Bigl(\int_{t}^{t'}1\, ds\Bigr)^{1/2}\leq C |t'-t|^{1/2}\,.\] 
 By Arzel\`a-Ascoli, $\hat{y}_{\tau}$ converges uniformly to the limit function $y$.
  For $k \in \{1, \ldots, T / \tau\}$ and $t \in ((k-1)\tau, k\tau]$ we define the piecewise affine interpolation $\hat g_\tau(t) \defas g_{k-1}^\tau + \tau^{-1} (t - (k-1)\tau) (g_k^\tau - g_{k-1}^\tau)$.
 Then, by the fundamental theorem of calculus and Hölder's inequality, for any $t \in [0, T]$ it follows  
 \begin{equation*}
    |\hat g_\tau(t) - g(t)| \leq |\hat g_\tau(t) - g((k-1)\tau)| + |g((k-1)\tau) - g(t)| \leq 2 \|\partial_t g\|_{L^2([0, T]; \R^{Nn})} \sqrt{\tau},
 \end{equation*}
where $k$ is the smallest natural number  such that  $k\tau \geq t$. \EEE 
This shows $\hat g_\tau \to g$ in $L^\infty([0, T]; \R^{Nn})$.
Note also that we have $\hat{g}_{\tau}(t)=\hat{y}_\tau(t)$ in $\mathcal{L}_{D}(\Omega)$ for all $t \in [0,T]$ \EEE since by construction $y^\tau_k=g^\tau_k$ on $\mathcal{L}_{D}(\Omega)$ for all $k$. This yields $y(t) \in \mathcal{A}(g(t))$ for all $t \in [0,T]$.
 It remains to observe that also the piecewise constant interpolation $y_{\tau}$ converges uniformly to $y$. Indeed, we know that for every $t\in[0,T]$ we have $y_{\tau}(t)=\hat{y}_{\tau}(s)$ for a certain $s=k\tau$ with $|s-t|\leq \tau$. In particular, we get
  \[ |\hat{y}_{\tau}(t)-y_{\tau}(t) |= |\hat{y}_{\tau}(t)-\hat{y}_{\tau}(s)|\leq C \sqrt{|t-s|}\leq C \sqrt{\tau} \,.\]
This shows that also $y_{\tau}$ converges uniformly to $y$  (along the same subsequence).  
\end{proof}

\subsection{The limiting  delay differential equation}

Next,  we   consider the limiting evolution $y$ identified in Theorem \ref{th: compactness}, and  give the proof of Theorem \ref{th: main result}. We first introduce some additional notation. Recalling \eqref{eq: crazy1}--\eqref{eq: crazy2} we let 
\[E^{x,x'}_{\tau}(t, \cdot):=E^{x,x'}_{k}(\cdot)=E_{x,x'}(\,\cdot\,; (y_j)_{j\leq k}) \quad \text{and}\quad \mathcal{E}_{\tau}(t, \cdot):=\mathcal{E}_{k}(\cdot)=\mathcal{E}(\cdot; (y_j)_{j\leq k})\,\]  and $M^{x,x'}_{\tau}(t):=M^{x,x'}_k$ for $t\in (\tau k, \tau(k+1)]$. 
Although the potentials $W_{x,x'}$ are smooth, the function $v\mapsto E^{x,x'}_{\tau}(t,v)$ is possibly not differentiable if  $W(M^{x,x'}_{\tau}(t)) = W_{x,x'}(|v(x)-v(x')|)$. \EEE Therefore, we need to introduce the concept of subdifferential for $\lambda$-convex functions, see  e.g.\ \cite{sant-am}. As before, for convenience we drop $x,x'$ in the notation.  Recall that 
$$ E_{\tau}(t, v) = (1- \varphi(M_\tau(t)))  \, W ( |v(x') - v(x)| )  + \varphi(  M_\tau(t)) \big(W(  M_\tau(t)) \vee  W(|v(x') - v(x)| )\big) $$
for $t\in  [0,T] \EEE $, see \eqref{eq: Ek-1}.  
 In fact, by Assumption  \ref{W_inf} \EEE one observes that $E_{\tau}(t, \cdot)$ is $\lambda$-convex for some $\lambda <0$, i.e., $v\mapsto E_{\tau}(t, v) -\frac{\lambda}{2}|v|^2$ is convex.  In this case, the subdifferential is defined as  
\begin{align}\label{eq: subdiffi}
\partial E_{\tau}(t,v):=\{p\in \mathbb{R}^{\bar{N}n}: E_{\tau}(t,y)\geq  E_{\tau}(t,v) +p \cdot (\bar{y}-\bar{v})  + \tfrac{\lambda}{2}|\bar{y}-\bar{v}|^2 \ \text{for all $y \in \R^{Nn}$} \}\,,
\end{align}
where $\bar{y} \in \R^{\bar{N}n}$ and $\bar{v} \in \R^{\bar{N}n}$ denote the vectors of  images of  $x \in \mathcal{L}(\Omega) \setminus \mathcal{L}_D(\Omega)$ under $y$ and $v$, respectively. \EEE 
 By the theory of subdifferentials we compute that  
\[  \partial E_{\tau}(t,v) =   \begin{cases}
   \lbrace (1-\varphi(M_{\tau}({t})))  W'(|v(x)-v(x')|) z_{x,x'}  \rbrace & W(M_{\tau}({t})) > W(|v(x)-v(x')|) \\
  \lbrace   W'(|v(x)-v(x')|) z_{x,x'} \rbrace  &  W(M_{\tau}({t})) < W(|v(x)-v(x')|)  \\ 
     [1-\varphi(M_{\tau}({t}))   , 1 ] \,  \EEE W'(|v(x)-v(x')|)z_{x,x'}   &  W(M_{\tau}({t})) = W(|v(x)-v(x')|)
\end{cases}\,,\]
 where  $z_{x,x'} := (\nabla_{v_i}  |v(x)-v(x')| )_{i=1,\ldots,\bar{N}} \in \R^{\bar{N}n}$ whose entries are given by $\nabla_{v_i}   |v(x)-v(x')| = \frac{v(x)-v(x')}{|v(x)-v(x')|}  \in \R^n $ if $x=x_i$,  $\nabla_{v_i}   |v(x)-v(x')| = \frac{v(x')-v(x)}{|v(x)-v(x')|}$ if $x'=x_i$, and $\nabla_{v_i}   |v(x)-v(x')| =0 $ if $x_i \neq x,x'$.   Here, we use the notation $W' = \frac{\rm d}{{\rm d} r} W$. \EEE Note that for  $W(M_{\tau}({t})) \neq W(|v(x)-v(x')|)$, the function is differentiable and therefore the subdifferential only consists of one element, whereas in the case $W(M_{\tau}({t})) = W(|v(x)-v(x')|)$ it is given by an interval.  If $\partial E_{\tau}(t,v)$ is not single valued, we denote by $\partial^\circ E_{\tau}(t,v)$ the element of minimal norm. In our case, this is 
\begin{align}\label{eq: subdiff1}
\partial^\circ E_{\tau}(t,v)=   \begin{cases}
    (1-\varphi(M_{\tau}({t})))  W'(|v(x)-v(x')|) z_{x,x'}  & W(M_{\tau}({t})) \ge W(|v(x)-v(x')|) \\
   W'(|v(x)-v(x')|) z_{x,x'}    &  W(M_{\tau}({t})) < W(|v(x)-v(x')|)  
\end{cases}\,.
\end{align}
For the entire energy, this reads as
\begin{align}\label{eq: subdiff2}
\partial \mathcal{E}_{\tau}(t,v)=\frac{1}{2}\sum_{\underset{x \neq x'}{x, x' \in \mathcal{L}(\Omega)}}\partial E^{x,x'}_\tau(t,v), \quad \partial^\circ\mathcal{E}_{\tau}(t,v)=\frac{1}{2}\sum_{\underset{x \neq x'}{x, x' \in \mathcal{L}(\Omega)}}\partial^\circ E^{x,x'}_\tau(t,v)
\end{align}
 for $t\in  [0,T]\EEE$, where as before $\partial^\circ \mathcal{E}_{\tau}(t,v)$ denotes the element of minimal norm.  After these preparations, we proceed with the proof of Theorem \ref{th: main result}.

\begin{proof}[Proof of Theorem \ref{th: main result}]
Since by construction  $y^\tau_{k+1}\in \mathrm{arg} \min_{v} \mathcal{E}_{k}(v)+\frac{\nu|v-y_{k}|^2}{2\tau}$, for times $t\in (k\tau,(k+1)\tau)$  we obtain 
\begin{align}\label{eq: tolimit}
  \nu \EEE \partial_{t} \hat{y}_{\tau}(t) \in - \partial \mathcal{E}_{\tau} (t,y_\tau(t)) \EEE \quad \text{on $\mathcal{L}(\Omega) \setminus \mathcal{L}_{D}(\Omega)$}\,,
 \end{align}
 where $\partial$ is again taken with respect to the variables $y_1,\ldots, y_{\bar{N}}$, cf.\ \eqref{eq: subdiffi}. \EEE   We aim to pass to the limit in this equation. The term on the left-hand side converges weakly to $ \nu \EEE\partial_{t}y$ by Theorem \ref{th: compactness}. We thus consider the limit of the right-hand side.

Because of the uniform convergence $y_{\tau}\to y$, we particularly know that for an arbitrary pair of lattice points $(x,x')$ we have  \[|y_{\tau}(t,x')-y_{\tau}(t,x)|\to |y(t,x)-y(t,x')| \quad \text{ as $\tau \to 0$ for all $t\in [0,T]$\EEE}\,.\] 
 With the continuity of $W'_{x,x'}$, we thus obtain 
\begin{align}\label{eq: conv1}
W'_{x,x'}(|y_{\tau}(t, x)-y_{\tau}(t, x')|)\to W'_{x,x'}(|y(t, x)-y(t, x')|)\quad \text{ for all } t\in [0,T]\,. 
\end{align}
Due to the fact that the convergence is uniform in time, we also get the continuity of $M_{\tau}(t)$ with respect to $\tau$, i.e.,  we get \EEE 
\begin{align}\label{eq: conv2}
M^{x,x'}_{\tau}(t)=\sup_{s\leq t-\tau }|y_{\tau}(s,x)-y_{\tau}(s,x')| \to  \sup_{s< t} |y(s,x)-y(s,x')|
\end{align}     
as $\tau \to 0$ for each pair $(x,x')$ and all $t \in [0,T]$. With the definition in \eqref{eq: memory}, this means
\begin{align}\label{eq: cont in M}
M^{x,x'}_{\tau}(t)  \to M_{x,x'}( (y(s))_{s<t} \EEE ) =: M^{x,x'}(t). 
\end{align}
 The convergences \EEE \eqref{eq: conv1}--\eqref{eq: conv2} and   the continuity of $\varphi$ allow us to go to the limit in $\partial E^{x,x'}_\tau(t, y_\tau(t))$, i.e.,
\begin{align}\label{eq: limiting passage}
\lim_{\tau \to 0}  \partial E^{x,x'}_\tau(t, y_\tau(t)) =   \partial{E}_{x,x'}(y(t); (y(s))_{s<t}) \EEE 
\end{align}
for all $(x,x')$ and all $t \in [0,T]$, where
\[  \partial{E}_{x,x'}(y(t); (y(s))_{s<t}) \EEE =   \begin{cases}
   \lbrace (1-\varphi(M^{x,x'}(t)))  W'_{x,x'}(|y(t,x)-y(t,x')|) z_{x,x'}  \rbrace  \\
  \lbrace   W'_{x,x'}(|y(t,x)-y(t,x')|) z_{x,x'} \rbrace  \\ 
    [1-\varphi(M^{x,x'}(t))   , 1 ] \EEE \, W'_{x,x'}(|y(t,x)-y(t,x')|) z_{x,x'},  
\end{cases}\]
depending on whether $W_{x,x'}(M^{x,x'}(t))$ is bigger, smaller, or equal to $ W_{x,x'}(|y(t,x)-y(t,x')|)$.  Summing over all pairs $(x,x')$ and recalling the definition of the energy in \eqref{eq: main energy} we conclude  
 $${\lim_{\tau \to 0}   \partial \mathcal{E}_{\tau}(t, y_\tau(t))  =  \partial \EEE  \mathcal{E}(y(t); (y(s))_{s<t})   \quad \text{on $\mathcal{L}(\Omega) \setminus \mathcal{L}_{D}(\Omega)$}.}$$
Now, passing to the limit in \eqref{eq: tolimit} we conclude
$$  \nu \EEE \partial_{t} {y}(t) \in - \partial \mathcal{E}(y(t);  (y(s))_{s<t} ) \quad \text{on $\mathcal{L}(\Omega) \setminus \mathcal{L}_{D}(\Omega)$} $$
for almost every $t \in [0,T]$. Recalling the argumentation in \cite[Proposition 2.2]{sant-am} we deduce that  
$$  \nu \EEE \partial_{t} {y}(t) = - \partial^\circ \mathcal{E}(y(t); (y(s))_{s<t}) \quad \text{on $\mathcal{L}(\Omega) \setminus \mathcal{L}_{D}(\Omega)$} $$
for almost every $t \in [0,T]$.  This concludes the proof. 
\end{proof}

\begin{remark}[Role of $\varphi$ and viscosity $\nu$]\label{rem: discussion}
{\normalfont
 The delicate part in the above proof is the limiting passage $\tau \to 0$ in $ \partial \mathcal{E}_{\tau}(t, \cdot)$. In particular, the continuity of $\varphi$ is essential in  \eqref{eq: limiting passage}: \EEE if $\varphi$ was not continuous instead,  as in the possible modeling assumption $\varphi(r) = 0 $ for $r \le R^1$ and  $\varphi(r) = 1 $ for $r > R^1$ discussed in Subsection \ref{sec: evo model}, \EEE it would not be clear how to obtain  convergence \EEE as $\tau \to 0$. We also mention that we consider a model of rate type with viscosity in order to control $\partial_t y$ and thus to obtain uniform convergence. This is crucial in order to obtain continuity of the memory variable, see \eqref{eq: cont in M}. For a model in the realm of rate-independent processes, uniform convergence cannot be guaranteed, and thus we possibly do not have continuity in \eqref{eq: cont in M}.
}
\end{remark}

\begin{remark}[Proof of existence for \eqref{eq: new model}]\label{rem: adapt}
{\normalfont
We briefly indicate the necessary adaptations in the proof in order to derive existence of solutions to  \eqref{eq: new model}. First, by an argument analogous to the one in Lemma \ref{lemma: main lemma} one can show that
\begin{align}\label{in the new proof}
\sum_{k=1}^{T /\tau} \frac{\nu|\bar{\nabla }y_{k}- \bar{\nabla} y_{k-1}|^2}{2\tau} \le C.
\end{align}
By the triangle inequality we find for each $j \in \lbrace 1,\ldots N\rbrace$ that
$$|y_k(x_j) - y_{k-1}(x_j)| \le |y_k(x_1) - y_{k-1}(x_1)| + \sum_{i=2}^j  |(y_k(x_i) - y_{k-1}(x_i)) -  (y_k(x_{i-1}) - y_{k-1}(x_{i-1})) |. $$
By a discrete H\"older's inequality we then find
$$ |y_k(x_j) - y_{k-1}(x_j)|^2 \le Cj \Big( |y_k(x_1) - y_{k-1}(x_1)|^2 + \sum_{i=2}^j  |(y_k(x_i) - y_{k-1}(x_i)) -  (y_k(x_{i-1}) - y_{k-1}(x_{i-1})) |^2 \Big). $$
Summation over all atoms and using $y_k(x_1) = g_k(x_1)$ for all time steps  yields \EEE
\begin{align*} \sum\nolimits_{j=1}^N|y_k(x_j) - y_{k-1}(x_j)|^2 &  \le CN^2 \big( |g_k(x_1) - g_{k-1}(x_1)|^2  \vspace{-0.2cm} \\ &  \ \ \ \ \ +   \sum\nolimits_{i=2}^N  |(y_k(x_i) - y_{k-1}(x_i)) -  (y_k(x_{i-1}) - y_{k-1}(x_{i-1})) |^2 \big)   \\
& \le CN^2\big(|g_k(x_1) - g_{k-1}(x_1)|^2 + \eps^2 |\bar{\nabla }y_{k}- \bar{\nabla} y_{k-1}|^2\big) ,
\end{align*}
where the last step follows from the definition in \eqref{eq: dg}. This along with \eqref{in the new proof} as well as \eqref{eq: apriori} shows 
$$ \sum_{k=1}^{T /\tau} \frac{|y_k  - y_{k-1}|^2}{2\tau} = \sum_{k=1}^{T /\tau} \sum_{j=1}^N \frac{|y_k(x_j)  - y_{k-1}(x_j)|^2}{2\tau} \le C,
$$ 
and at this point the compactness proof in Theorem \ref{th: compactness} can be repeated. Concerning the passage $\tau \to 0$, we only need to replace the inclusion \eqref{eq: tolimit} by 
$$  \nu \EEE \bar{\rm D} \partial_{t} \hat{y}_{\tau}(t) \in - \partial \mathcal{E}_{\tau}(t, y_\tau(t))\,,$$
 where $\bar{\rm D}$ is defined as in \eqref{def: VektorD}, 
and then we can pass to the limit on both sides as before. 
}
\end{remark}

\begin{remark}[Extended model \eqref{eq: alternative energy}]\label{rem: new}
{\normalfont

We briefly comment on the fact that the proof also works for the enhanced energy \eqref{eq: alternative energy}. 
 In fact, the continuity of $\varphi$ and $W_{x,x',x''}$ 
as well as the continuity of the time-discrete memory variable $M^{\tau}_{x,x'}$ (see \eqref{eq: cont in M}), which follows from the uniform convergence $y_{\tau}\to y$, ensure the validity of \eqref{eq: limiting passage} also in this setting.
} \end{remark}
\EEE

\section{Conclusion and future work}
In this paper, we have developed a mathematically sound framework for the evolutionary description of brittle fracture on the atomistic level. The proposed model features an energy that comprises a memory variable for each pair interaction accounting for the irreversibility of the fracture process. We have proven the existence of a quasi-static evolution driven by this energy resulting in a delay-differential equation.  Furthermore, our analysis is supplemented with numerical experiments in 1D and 2D. Here, we observed a crack evolving along crystallographic lines, in accordance with previous experimental and mathematical results. \\
As mentioned before, this study forms the foundation of a forthcoming work \cite{FriedrichSeutter} where we will investigate the atomistic-to-continuum passage for crack evolution. Additionally, it would be of interest to further examine possible extensions of the model to non-local and multi-body energies. In regard to numerics, one could  consider the effects of different lattice structures on the crack evolution.   

\EEE

\section*{Acknowledgements} 
This research was funded by the Deutsche Forschungsgemeinschaft (DFG, German Research Foundation) - 377472739/GRK 2423/1-2019. The authors are very grateful for this support. This work was supported by the DFG project FR 4083/3-1 and by the Deutsche Forschungsgemeinschaft (DFG, German Research Foundation) under Germany's Excellence Strategy EXC 2044 -390685587, Mathematics M\"unster: Dynamics--Geometry--Structure.
\EEE


\end{document}